\newtheorem{thm}{Theorem}[section]
\newtheorem{rema}[thm]{Remark}
\newtheorem{cor}[thm]{Corollary}
\newtheorem{lem}[thm]{Lemma}
\newtheorem{prop}[thm]{Proposition}
\newtheorem{defn}[thm]{Definition}
\newtheorem{ex}[thm]{Example}
\theoremstyle{definition}
\numberwithin{equation}{section}
\newcommand{\R}{\mathbb R}
\newcommand{\tr}{{\rm tr}}
\newcommand{\resumename}{R\'esum\'e}
\newcommand{\eqKu}{
\stackrel{^{\text{\tiny deg
}}}{=}}
\begin{document}

\date{\today}

\title[Pinczon algebras]{Quadratic and Pinczon algebras}
\author[D. Arnal]{Didier Arnal}
\author[W. Bakbrahem]{Wissem Bakbrahem}
\author[M. Selmi]{Mohamed Selmi}
\address{
Institut de Math\'ematiques de Bourgogne\\
UMR CNRS 5584\\
Universit\'e de Bourgogne-Franche Comt\'e\\
U.F.R. Sciences et Techniques
B.P. 47870\\
F-21078 Dijon Cedex\\France}
\email{Didier.Arnal@u-bourgogne.fr}
\address{
Universit\'e de Sousse, Laboratoire de Math\'ematique Physique, Fonctions sp\'eciales et Applications\\
Ecole Sup\'erieure des Sciences et de Technologie de Hammam Sousse\\
Rue Lamine Abassi \\
4011 H.Sousse \\Tuni\-sie}
\email{wissem-essths@hotmail.fr}
\address{
Universit\'e de Sousse, Laboratoire de Math\'ematique Physique, Fonctions sp\'eciales et Applications\\
Ecole Sup\'erieure des Sciences et de Technologie de Hammam Sousse\\
Rue Lamine Abassi \\
4011 H.Sousse \\Tuni\-sie}
\email{mohamed.selmi@fss.rnu.tn}



\begin{abstract}
Given a symmetric non degenerated bilinear form $b$ on a vector space $V$, G. Pinczon and R. Ushirobira defined a bracket $\{~,~\}$ on the space of multilinear skewsymmetric forms on $V$. With this bracket, the quadratic Lie algebra structure equation on $(V,b)$ becomes simply $\{\Omega,\Omega\}=0$.

We characterize similarly quadratic associative, commutative or pre-Lie structures on $(V,b)$ by the same equation $\{\Omega,\Omega\}=0$, but on different spaces of forms. These definitions extend to quadratic up to homotopy algebras and allows to describe the corresponding cohomologies.\\

\end{abstract}

\keywords{Quadratic algebras, Cohomology, Graded Lie algebras}
\subjclass[2000]{17A45, 16E40, 17BXX, 17B63}


\maketitle


\section{Introduction}


In \cite{PU,DPU} (see also \cite{D,MPU}), Georges Pinczon and Rosane Ushirobira introduced what they called a Poisson bracket on the space of skewsymmetric forms on a finite dimensional vector space $V$, equipped with a symmetric, non degenerated, bilinear form $b$. If $(e_i)$ is a basis in $V$ and $(e'_j)$ the basis defined by the relations $b(e'_j,e_i)=\delta_{ij}$, the bracket is:
$$
\{\alpha,\beta\}=\sum_i\iota_{e_i}\alpha\wedge\iota_{e'_i}\beta.
$$
Especially, if $\alpha$ is a $(k+1)$-form, and $\beta$ a $(k'+1)$, then $\{\alpha,\beta\}$ is a $(k+k')$-form. Shifting the degree on $V$ by -1, we replace $V$ by $V[1]$, the skew-symmetric bilinear forms on $V$ becomes symmetric forms on $V[1]$ and the bracket $\{~,~\}$ a Lie bracket.\\

In fact, the authors proved that a structure of quadratic Lie algebra $(V,[~,~],b)$ on $V$ is completely characterized by a 3-form $I$, such that $\{I,I\}=0$. The relation between the Lie bracket and $I$ is simply $I(x,y,z)=b([x,y],z)$, and the equation $\{I,I\}=0$ is the structure equation. A direct consequence of this construction is the existence of a cohomology on the space of forms, given by:
$$
d\alpha=\{\alpha,I\}.
$$
This cohomology characterizes the problem of definition and deformation of quadratic Lie algebra structure on $(V,b)$.\\

In this paper, we first generalize this construction, defining the Pinczon bracket on the space $\mathcal C(V)$ of cyclic multilinear forms on the shifted space $V[1]$. 
A Pinczon bracket on $\mathcal C(V)$ is a Lie bracket 
$(\Omega,\Omega')\mapsto\{\Omega,\Omega'\}$ such that $\{\Omega,\Omega'\}$ is $(k+k')$-linear, if $\Omega$ is $(k+1)$-linear, and $\Omega'$ $(k'+1)$-linear, and, for each linear form $\alpha$, $\{\alpha,~\}$ is a derivation of the cyclic product.\\

As in \cite{PU}, there is a one-to-one correspondence between the set of Pinczon brackets on $\mathcal C(V)$ and the 
set of non degenerated, bilinear symmetric forms $b$ on $V$, and the correspondence is given through the Pinczon-Ushirobira formula:
$$
\{\Omega,\Omega'\}=\sum_i\iota_{e_i}\Omega\odot\iota_{e'_i}\Omega'.
$$ 

On the other hand, it is well known that the space $\bigotimes^+ V[1]$ is a cogebra for the comultiplication given by the deconcatenation map:
$$
\Delta(x_1\otimes\ldots\otimes x_k)=\sum_{i=1}^{k-1}(x_1\otimes\ldots\otimes x_i)\bigotimes(x_{i+1}\otimes\ldots\otimes x_k).
$$
Moreover, the coderivations of $\Delta$ are characterized by their Taylor series $(Q_k)$ where $Q_k:\bigotimes^kV[1]\longrightarrow V[1]$, and the bracket $[Q,Q']$ of two such coderivation is still a coderivation.\\

Suppose now there is a symmetric non degenerated form $b$ on $V$. Denotes $B$ the corresponding form, but on $V[1]$. T
here is a bijective map between $\mathcal C(V)$ and the space $\mathcal D_B$ of $B$-quadratic coderivation $Q$, given by the formula:
$$
\Omega_Q(x_1,\ldots,x_{k+1})=B(Q(x_1,\ldots,x_k),x_{k+1}).
$$
This map is an isomorphism of Lie algebra: $\{\Omega_Q,\Omega_{Q'}\}=\Omega_{[Q,Q']}$.\\

With this construction, the notion of quadratic associative algebra, respectively quadratic associative algebra up to homotopy does coincide with the notion of Pinczon algebra structure on $\mathcal C(V)$. This gives also an explicit way to refind the Hochschild cohomology defined by the algebra structure.\\

Moreover, the subspace $\mathcal C_{vsp}(V)$ of cyclic, vanishing on shuffle products forms is a Lie subalgebra of $\mathcal C(V)$. The restriction to this subalgebra of the above construction gives us the notion of quadratic commutative algebra (up to homotopy): it is a Pinczon algebra structure on $\mathcal C_{vsp}(V)$. Similarly, one refinds the Harrison cohomology associated to commutative algebras.\\

A natural quotient of $(\mathcal C(V),\{~,~\})$ is the Lie algebra $(\mathcal S,\{~,~\}|)$ of totally symmetric multilinear forms on $V[1]$. This allows us to refind the notion of quadratic Lie algebra (up to homotopy), and the corresponding Chevalley cohomology.\\

Considering now bi-symmetric mulitlinear forms on $V[1]$: {\sl i.e.} separatly symmetric in their two last variables and in all their other variables, and extending canonically the Pinczon bracket to a bracket $\{~,~\}^+$ on this space of forms, we can define similarly quadratic pre-Lie algebra structures on $(V,b)$, and the corresponding pre-Lie cohomology.\\

Finally, we study the natural example of the space of $n\times n$ matrices. An unpublished preprint (\cite{A}) contains a part of these results.\\


\section{Cyclic forms}

\subsection{Koszul's rule}

\

In this paper, $V$ is a finite dimensional graded vector space, on a charateristic 0 field $\mathbb K$. Denote $|x|$ the degree of a vector $x$ in $V$.\\

First recall the sign rule due to Koszul (see \cite{Ko}). For any relation between quantities using letters representing homogeneous objects, in different orderings, it is always understood that there is an implicit + sign in front of the first term. For each other term, if $\sigma$ is the permutation of the letters between the first quantity and the considered term, there is the implicit sign $\varepsilon_{|letters|}(\sigma)$ (the sign of $\sigma$, taking into account only positions of the odd degree letters) in front of it. 

As usual, $V[1]$ is the space $V$ with a shifted degree. If $x$ is homogeneous, its degree in $V[1]$ is $\deg(x)=|x|-1$. Note simply $x$ for $\deg(x)$ when no confusion is possible. Very generally, we use a small letter for each mapping defined on $V$, and capital letter for the `corresponding' mapping, defined on $V[1]$. Let us define now these corresponding mappings.\\

For any $k$, define a `$k$-cocycle' $\eta$ by putting $\eta(x_1,\ldots,x_k)=(-1)^{\sum_{j\leq k}(k-j)x_j}$. Then, if $\varepsilon(\sigma)$ is the sign of the permutation $\sigma$ in $\mathfrak S_k$, 
$$
\eta(x_{\sigma(1)},\ldots,x_{\sigma(k)})\eta(x_1,\ldots,x_k)=\varepsilon(\sigma)\varepsilon_{|x|}(\sigma)\varepsilon_x(\sigma).
$$

If $q$ is a $k$-linear mapping from $V^k$ into a graded vector space $W$, define the associated $k$-linear mapping from $V[1]^k$ into $W[1]$, with $\deg(Q)=|q|+k-1$, by
$$
Q(x_1,\ldots,x_k)=\eta(x_1,\ldots,x_k)q(x_1,\ldots,x_k).
$$
Prefering to keep the 0 degree for scalars, if $\omega$ is a $k$-linear form on $V$, the same formula associates to $\omega$ a form $\Omega$, on $V[1]$, with degree $\deg(\Omega)=|\omega|+k$.\\ 

This shift of degree modifies the symmetry properties of these mappings. For instance, if $q$ is $\sigma$-invariant, then $Q$ is $\sigma$ skew-invariant.\\

\subsection{Pinczon bracket.}

\

It is defined on cyclic forms.

\begin{defn}
Let $\Omega$ be a $(k+1)$-linear form, $\Omega$ is a {\it cyclic} form on $V[1]$, if it satisfies:
$$
\Omega(x_{k+1},x_1,\ldots,x_k)=\Omega(x_1,\ldots,x_{k+1}).
$$
Denote $\mathcal C(V)$ the space of cyclic forms on $V[1]$.\\
\end{defn}

Let $\sigma\in\mathfrak S_k$ and $\Omega$ $k$-linear. Put 
$
(\Omega^\sigma)(x_1,\ldots,x_{k})= \Omega(x_{\sigma^{-1}(1)},\ldots,x_{\sigma^{-1}(k)}).
$
Denote $Cycl$ the subgroup of $\mathfrak S_{k}$ generated by the cycle $(1,2,\ldots,k)$. Put 
$$
\Omega^{Cycl}= \sum_{\tau\in Cycl}\Omega^\tau, \quad\text{ and }\quad
A\odot B=(A\otimes B)^{Cycl}.
$$

It is easy to prove that the so defined cyclic product is commutative, but non associative.\\

\begin{defn}
A Pinczon bracket $\{~,~~\}$ on the space $\mathcal C(V)$ of cyclic multilinear forms on a graded space $V[1]$ is a bilinear map  
such that
\begin{itemize}
\item[1.] If $\mathcal C^k$ is the space of $k$-linear cyclic forms, $\{\mathcal C^{k+1},\mathcal C^{k'+1}\}\subset\mathcal C^{k+k'}$,
\item[2.] $\mathcal C(V)$, equipped with $\{~,~\}$ is a graded Lie algebra, with center $\mathcal C^0$,
\item[3.] for any linear form $\alpha$, $\{\alpha,~\}$ is a derivation: if $\beta_1,\ldots,\beta_k$ are linear,
$$
\{\alpha,(\beta_1\otimes\ldots\otimes\beta_k)^{Cycl}\}=\sum_j\{\alpha,\beta_j\}(\beta_1\otimes\ldots\hat{\beta_j}\ldots\otimes\beta_k)^{Cycl},
$$
\end{itemize}
\end{defn}

Now,

\begin{prop}
\begin{itemize}
\item[1.] There is a bijective map between the set $\mathcal P$ of Pinczon bracket on $\mathcal C(V)$ and the set $\mathcal B$ of degree 0, symmetric, non degenerated bilinear forms $b$ on $V$,
\item[2.] Let $b$ be in $\mathcal B$, and $(e_i)$ a basis for $V$, if $(e'_i)$ is the basis defined by $b(e_i,e'_j)=\delta_{ij}$, then the Pinczon bracket associated to $b$ is:
$$
\{\Omega,\Omega'\}=\sum_i\iota_{e_i}\Omega\odot\iota_{e'_i}\Omega'.
$$
\end{itemize}
\end{prop}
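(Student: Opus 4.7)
The strategy is to build both maps $\mathcal B\to\mathcal P$ and $\mathcal P\to\mathcal B$ explicitly and check they are mutually inverse. For the first, given $b\in\mathcal B$, I take $\{\Omega,\Omega'\}$ defined by the Pinczon-Ushirobira formula and verify the four axioms. For the converse, given a Pinczon bracket, I restrict it to $\mathcal C^1\times\mathcal C^1\to\mathcal C^0=\mathbb K$; identifying $\mathcal C^1$ with $V^*$, this yields a bilinear form on $V^*$ whose inverse will provide the desired $b$.

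For the forward direction, independence of the basis $(e_i)$ is the familiar statement that $\sum_i e_i\otimes e'_i$ is the intrinsic inverse tensor of $b$; the bidegree claim and cyclicity of the output are immediate from the definition of $\odot$. Graded antisymmetry reduces to the shifted symmetry identity $\sum_i e_i\otimes e'_i=\sum_j(-1)^{|e_j||e'_j|}e'_j\otimes e_j$ combined with a single cyclic rotation absorbed by $\odot$. The derivation axiom on a linear form $\alpha$ is a direct computation: $\iota_{e_i}\alpha=\alpha(e_i)$ factors out as a scalar, reducing the bracket to a cyclically averaged single interior product $\iota_v$ with $v=\sum_i\alpha(e_i)e'_i$, which unwinds into the Leibniz-type sum claimed in the axiom.

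The main obstacle is the graded Jacobi identity. I expect to prove it by direct expansion: each of the three cyclic summands of the jacobiator becomes a double sum over basis pairs $(i,j)$, with nested contractions $\iota_{e_i}\iota_{e_j}$ on one argument and $\iota_{e'_i}$, $\iota_{e'_j}$ on the others, all assembled into iterated cyclic products. Using the graded commutation of interior products and the nonassociativity identity for $\odot$, by which $(A\odot B)\odot C$ resolves into a signed cyclic sum over the rearrangements of $A\otimes B\otimes C$, the contributions from the three cyclic summands should pair up and cancel. The real work is a careful Koszul sign tally, facilitated by the even degree of $b$.

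For the converse, graded antisymmetry in degree one (each linear form has degree $1$ on $V[1]$) makes the pairing $(\alpha,\beta)\mapsto\{\alpha,\beta\}$ on $\mathcal C^1$ symmetric, and non-degeneracy is forced by the center axiom: were $\alpha\in\mathcal C^1$ nonzero and in the kernel, the derivation axiom would give $\{\alpha,\cdot\}=0$ on every cyclic product of linear forms, hence on all of $\mathcal C(V)$, placing $\alpha$ in the center and contradicting $Z(\mathcal C(V))=\mathcal C^0$. The induced $b\in\mathcal B$ is thus uniquely determined. That the bracket itself is uniquely determined by $b$ then follows by induction on total arity: the derivation axiom pins down $\{\alpha,\Omega\}$ for $\alpha$ linear, and graded Jacobi together with the point-separating property of the family $\{\alpha,\cdot\}_{\alpha\in\mathcal C^1}$ propagates this to arbitrary $\{\Omega,\Omega'\}$. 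Thus any Pinczon bracket coincides with the Pinczon-Ushirobira formula evaluated on its associated $b$, completing the bijection.
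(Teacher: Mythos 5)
Your overall architecture matches the paper's: both directions of the bijection are built explicitly, non-degeneracy of the induced form on $\mathcal C^1$ is forced by the center axiom exactly as in the paper (a degenerate direction would be a central linear form by the derivation axiom), and uniqueness of the bracket is obtained by induction on arity, contracting with linear forms $\epsilon_i$ and using that the family $\iota_{e'_i}$ separates cyclic forms of positive arity. The one genuine divergence is the graded Jacobi identity for the formula $\{\Omega,\Omega'\}=\sum_i\iota_{e_i}\Omega\odot\iota_{e'_i}\Omega'$. The paper does not prove it by direct expansion at all: it defers it to the next section, where the bijection $Q\mapsto\Omega_Q$, $\Omega_Q(x_1,\dots,x_{k+1})=B(Q(x_1,\dots,x_k),x_{k+1})$, onto the $B$-quadratic coderivations of the deconcatenation coproduct is shown to satisfy $\{\Omega_Q,\Omega_{Q'}\}=\Omega_{[Q,Q']}$; Jacobi is then inherited from the commutator of coderivations. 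That structural route avoids the sign bookkeeping entirely and simultaneously sets up everything the rest of the paper needs.

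This is also where your proposal has a real gap: you identify the Jacobi identity as ``the main obstacle'' and then only describe what you \emph{expect} to happen --- that the double contractions $\iota_{e_j}\iota_{e_i}$ landing on a single factor cancel across the three cyclic summands and that the cross terms assemble via the non-associativity identity for $\odot$. None of this is carried out, and it is the only nontrivial analytic content of part 2 of the proposition: without it you have not shown that the Pinczon--Ushirobira formula defines a Lie bracket, so the map $\mathcal B\to\mathcal P$ is not yet well defined and the bijection does not close. The direct computation is feasible (it parallels the classical verification for the Poisson bracket on forms), but as written it is a plan, not a proof; either execute the sign tally, or substitute the paper's argument via the isomorphism with coderivations, which is both shorter and less error-prone.
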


\begin{proof}

Let $\{~,~\}$ be a Pinczon bracket on $\mathcal C(V)$, then $\{\mathcal C^0,\mathcal C(V)\}=0$, and $\{\mathcal C^1,\mathcal C^1\}\subset\mathbb K$.
The bracket defines a degree 0 bilinear, antisymmetric form $B^\star$ on $\mathcal C^1=(V[1])^\star$. As above this defines a symmetric bilinear form $b^\star$ on the space $V^\star=(V[1])^\star[1]$.

Suppose that for some $\alpha$ in $\mathcal C^1$, $B^\star(\alpha,\mathcal C^1)=\{\alpha,\mathcal C^1\}=0$. Since $\{\alpha,~\}$ is a derivation, $\{\alpha,\mathcal C^k\}=0$, and $\alpha$ is a central element. Thus $\alpha=0$, $b^\star$ is non degenerated and allows to identify $V^\star$ and $V$ and $b^\star$ to a non degenerated bilinear symmetric form $b$ on $V$.\\

Let $B^\star$ be a skewsymmetric bilinear form on $(V[1])^\star$. For any basis $(e_i)$ of $V[1]$ there are vectors $e'_i$ such that:
$$
B^\star(\alpha,\beta)=\sum_i\iota_{e_i}\alpha\otimes \iota_{e'_i}\beta=\sum_i\alpha(e_i)\beta(e'_i)=\sum_i\iota_{e_i}\alpha\odot\iota_{e'_i}\beta.
$$

Coming back to $V^\star$ this means, if all the objects are homogeneous,
$$\aligned
b^\star(\alpha,\beta)&=\sum_i(-1)^{|\beta|}\alpha(e_i)\beta(e'_i).
\endaligned
$$
Identify $V^\star$ to $V$ by defining, for any $\gamma$ in $V^\star$, the vector $x_\gamma$ in $V$ such that $\alpha(x_\gamma)=b^\star(\alpha,\gamma)$, for any $\alpha$ in $V^\star$. 
Then $e'_i=x_{\epsilon_i}$, where $(\epsilon_i)$ is the dual basis of $(e_i)$. Therefore $(e'_i)$ is a basis for $V$, and 
$
b(e'_j,e_i)
=\delta_{ij}$.

Consider now the bracket:
$$
\{\Omega,\Omega'\}_P=\sum_i\iota_{e_i}\Omega\odot \iota_{e'_i}\Omega',
$$
where $\Omega$ is a $k+1$-linear cyclic form and $\Omega'$ a $k'+1$-linear one. In a following section, we shall prove this bracket defines a graded Lie algebra structure on $\mathcal C(V)$. It is clear that, for any $\alpha$ and $\beta_j$ linear,
$$
\{\alpha,(\beta_1\otimes\ldots\otimes\beta_k)^{Cycl}\}_P=\sum_j\{\alpha,\beta_j\}_P(\beta_1\otimes\ldots\hat{\beta_j}\ldots\otimes\beta_k)^{Cycl}.
$$
Moreover the center of the Lie algebra $(\mathcal C(V),\{~,~\}_P)$ is $\mathcal C^0$. In other word, $\{~,~\}_P$ is a Pinczon bracket.\\

If $k+k'\leq0$, $\{\Omega,\Omega'\}=\{\Omega,\Omega'\}_P$. Suppose by induction this relation holds for $k+k'<N$ and consider $\Omega$ and $\Omega'$ such that $k+k'=N$. For any $i$,
$$\aligned
\{\epsilon_i,\{\Omega,\Omega'\}\}
&=-\{\Omega,\{\Omega',\epsilon_i\}_P\}_P-\{\Omega',\{\epsilon_i,\Omega\}_P\}_P
=\{\epsilon_i,\{\Omega,\Omega'\}_P\}_P
=\iota_{e'_i}\{\Omega,\Omega'\}_P
\endaligned
$$

On the other hand, if $\{\Omega,\Omega'\}=\sum_\beta(\beta_1\otimes\ldots\otimes\beta_{k+k'})^{Cycl}$,
$$\aligned
\{\epsilon_i,\{\Omega,\Omega'\}\}&=\sum_{\beta,j}\beta_j(e'_i)(\beta_1\otimes\ldots\hat{\beta_j}\ldots\otimes\beta_{k+k'})^{Cycl}
=\iota_{e'_i}\{\Omega,\Omega'\}.
\endaligned
$$
This proves the existence and unicity, and gives the form of the Pinczon bracket associated to the symmetric, non degenerated bilinear form $b$ on $V$.

\end{proof}



\section{Codifferential}


\

\subsection{General construction}

\

The deconcatenation $\Delta$ is a natural comultiplication in the tensor algebra $\bigotimes^+V[1]$:
$$
\Delta(x_1\otimes\ldots\otimes x_k)=\sum_{r=1}^{k-1}(x_1\otimes\ldots\otimes x_r)\bigotimes(x_{r+1}\otimes\ldots\otimes x_k).
$$
The space $\mathcal D$ of coderivations of $\Delta$ is a natural graded Lie algebra for the commutator. It is well known (see for instance \cite{K,LM}) that any multilinear mapping $Q$ can be extended in an unique way into a coderivation $D_Q$ of $\Delta$, and conversely any coderivation $D$ has an unique form $D=\sum_k D_{Q_k}$. Thus the space of multilinear mappings is a graded Lie algebra for the bracket $[Q,Q']=Q\circ Q'-Q'\circ Q$, where, if $x_{[a,b]}=x_a\otimes x_{a+1}\otimes\dots\otimes x_b$,
$$
Q\circ Q'(x_{[1,k+k'-1]})=\sum_{r=0}^{k-1}Q(x_{[1,r]},Q'(x_{[r+1,r+k']}),x_{[r+k'+1,k+k'-1]}).
$$

\subsection{Relation with the Pinczon bracket}

\

Consider a vector space $V$ equipped with a symmetric, non degenerated bilinear form $b$, with degree 0, denote $B$ the associated form on $V[1]$. 
For any linear map $Q:V[1]^k\rightarrow V[1]$, define the $(k+1)$-linear form:
$$
\Omega_Q(x_1,\ldots,x_{k+1})=B(Q(x_1,\ldots,x_k),x_{k+1}),
$$
and let us say that $Q$ is $B$-quadratic if $\Omega_Q$ is cyclic. Denote $\mathcal D_B$ the space of $B$-quadratic multilinear maps
.\\

The fundamental examples of cyclic maps are mappings associated to a Lie bracket or an associative multiplication on $V$. More precisely, if $(x,y)\mapsto q(x,y)$ is any internal law, with degree 0, and $Q(x,y)=(-1)^xq(x,y)$, then $\Omega_Q$ is cyclic if and only if:
$$\aligned
b(q(x,y),z)
=b(x,q(y,z)),
\endaligned
$$
if and only if $(V,q,b)$ is 
a quadratic algebra.\\

Remark that if $q$ is a Lie bracket, then $\Omega_Q$ is symmetric. Now, if $q$ is a commutative (and associative) product, then 
$\Omega_Q$ is vanishing on the image of the shuffle product on the 2 first variables:
$$
sh_{(1,1)}(x_1\otimes x_2)=(x_1\otimes x_2)+(x_2\otimes x_1).
$$

\begin{prop}

The space $\mathcal D_B$ of $B$-quadratic maps $Q$ is a Lie subalgebra of $\mathcal D$.

The space $\mathcal C(V)$ of cyclic forms, equipped with the Pinczon bracket is a graded Lie algebra, isomorphic to $\mathcal D_B$. \\
\end{prop}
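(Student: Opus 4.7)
The plan is to construct an explicit bijection $Q \mapsto \Omega_Q$ between multilinear maps into $V[1]$ and multilinear forms on $V[1]$, verify that it restricts to a bijection $\mathcal D_B \leftrightarrow \mathcal C(V)$ essentially by definition of $B$-quadraticity, and then establish the key identity
$$
\Omega_{[Q,Q']} \;=\; \{\Omega_Q, \Omega_{Q'}\}.
$$
Once this identity is in hand, the fact (already available from the previous proposition) that the Pinczon bracket preserves $\mathcal C(V)$ immediately forces the coderivation commutator to preserve $\mathcal D_B$, so $\mathcal D_B$ is a Lie subalgebra of $\mathcal D$ and $Q \mapsto \Omega_Q$ is a graded Lie algebra isomorphism onto $(\mathcal C(V),\{\,,\,\})$.

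For the bijection, fix a basis $(e_i)$ of $V[1]$ with $B$-dual basis $(e'_i)$. Non-degeneracy of $B$ lets one invert the defining relation $\Omega_Q(x_1,\ldots,x_k,y)=B(Q(x_1,\ldots,x_k),y)$, obtaining
$$
Q(x_1,\ldots,x_k) \;=\; \sum_i (-1)^{\bullet}\, \Omega_Q(x_1,\ldots,x_k,e_i)\, e'_i,
$$
with an appropriate Koszul sign $(-1)^\bullet$. This shows $Q \mapsto \Omega_Q$ is bijective at each arity. By definition, $Q \in \mathcal D_B$ exactly when $\Omega_Q$ is cyclic, so the restriction provides the desired bijection $\mathcal D_B \leftrightarrow \mathcal C(V)$.

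To prove $\Omega_{[Q,Q']} = \{\Omega_Q,\Omega_{Q'}\}$, I would evaluate both sides on $(x_1,\ldots,x_{k+k'})$ and compare. On the left, the composition $Q \circ Q'(x_1,\ldots,x_{k+k'-1})$ inserts $Q'$ at each of the $k$ possible positions $r=0,\ldots,k-1$; pairing the result with $x_{k+k'}$ via $B$ and using the cyclicity of $\Omega_Q$ to rotate $x_{k+k'}$ into the appropriate slot expresses each term as a product of the form $B(Q(\ldots),Q'(\ldots))$, which via the completeness relation on the dual bases decomposes as a sum over $i$ of contractions $\iota_{e_i}\Omega_Q$ against $\iota_{e'_i}\Omega_{Q'}$. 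On the right, the Pinczon bracket is already such a sum, and the cyclic product $\odot$ generates precisely the sum over insertion positions, once the cyclicity of both $\Omega_Q$ and $\Omega_{Q'}$ is used to reconcile the two bookkeeping conventions.

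The main obstacle is sign analysis. The Koszul rule interacts nontrivially with the shift $q\mapsto Q$, with the contractions $\iota_{e_i}$, and with the cyclic rotations used to align terms, so tracking the sign $(-1)^\bullet$ in the inversion formula and checking that the two sides of $\Omega_{[Q,Q']}=\{\Omega_Q,\Omega_{Q'}\}$ agree term by term is the only genuinely delicate step. This is also the point at which the restriction to $\mathcal D_B$ becomes essential: cyclicity of $\Omega_Q$ and $\Omega_{Q'}$ is exactly what legitimizes the rotations used to align the two expansions, and hence what guarantees that it is $\mathcal D_B$, and not all of $\mathcal D$, which is mapped isomorphically onto $\mathcal C(V)$.
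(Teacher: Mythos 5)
Your proposal is correct, and its central computation---expanding $\{\Omega_Q,\Omega_{Q'}\}$ via the completeness relation $\sum_i B(u,e_i)B(v,e'_i)=B(u,v)$ into $\sum_{\sigma\in Cycl}B(Q(x_{\sigma^{-1}([1,k])}),Q'(x_{\sigma^{-1}([k+1,k+k'])}))$ and then using cyclicity to rotate each term into the form $B(Q(\ldots,Q'(\ldots),\ldots),x_{k+k'})$, splitting according to whether the last argument lands in the $Q$-block or the $Q'$-block---is exactly the paper's. The one organizational difference is the order of the two claims: the paper first proves directly, by a separate computation with the relation $B(Q(x_{[1,k]}),Q'(x_{]k,k+k']}))=-B(Q'(Q(x_{[1,k]}),x_{]k,k+k'-1]}),x_{k+k'})$, that $[Q,Q']$ is $B$-quadratic, and only then establishes $\{\Omega_Q,\Omega_{Q'}\}=\Omega_{[Q,Q']}$; you instead prove the identity first and read off closure of $\mathcal D_B$ as a corollary. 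That shortcut is legitimate and slightly more economical, but be careful about what you import from ``the previous proposition'': the Jacobi identity for $\{~,~\}$ on $\mathcal C(V)$ is deferred by the paper to precisely this proposition (it is obtained by transporting the Lie structure of $\mathcal D$ through $Q\mapsto\Omega_Q$), so all you may, and all you need to, invoke is that the formula $\sum_i\iota_{e_i}\Omega\odot\iota_{e'_i}\Omega'$ visibly lands in $\mathcal C(V)$ because of the $Cycl$-symmetrization in $\odot$. With that caveat your argument is not circular, and the remaining work is the sign bookkeeping you correctly identify as the delicate step.
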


\begin{proof}
For any sequence $I=\{i_1,\ldots,i_k\}$ of indices, denote $x_I$ the tensor $x_{i_1}\otimes\ldots\otimes x_{i_k}$.
 
Suppose $Q$ is $k$-linear, $Q'$ $k'$-linear, both $B$-quadratic. Thus:
$$\aligned
B(Q(x_{[1,k]}),Q'(x_{]k,k+k']}))&=-B(Q'(Q(x_{[1,k]}),x_{]k,k+k'-1]}),x_{k+k'})\\
&=B(Q(Q'(x_{]k,k+k']}),x_{[1,k-1]}),x_k).
\endaligned
$$
Therefore:
$$\aligned
&B(Q\circ Q'(x_{[2,k+k']}),x_1)=\sum_{1\leq r}B(Q(x_{[2,r]},Q'(x_{]r,r+k']}),x_{]r+k',k+k']}),x_1)\\
&=\sum_{1\leq r<k}B(Q(x_{[1,r]},Q'(x_{]r,r+k']}),x_{]r+k',k+k'-1]}),x_{k+k'})
-B(Q'(Q(x_{[1,k]}),x_{]k,k+k'-1]}),x_{k+k'})\\
\endaligned
$$
Or
$$\aligned
B([Q,Q'](x_{[2,k+k']}),x_1)=
B([Q,Q'](x_{[1,k+k'-1]}),x_{k+k'})
\endaligned
$$
This means $[Q,Q']$ is $B$-quadratic, $\mathcal D_B$ is a Lie subalgebra of $\mathcal D$.\\

Now $\Omega_Q$ (resp. $\Omega_{Q'}$) is a $k+1$-linear (resp. $k'+1$-linear) cyclic form, and
$$\aligned
\{\Omega_Q,\Omega_{Q'}\}&(x_1,\ldots,x_{k+k'})=\left(\sum_i\iota_{e_i}\Omega_Q\otimes\iota_{e'_i}\Omega_{Q'}\right)^{Cycl}(x_1,\ldots,x_{k+k'})\\
&=\sum_{\sigma\in Cycl}\sum_i B(Q(x_{\sigma^{-1}(1)},\ldots,x_{\sigma^{-1}(k)}),e_i)
B(Q'(x_{\sigma^{-1}(k+1)},\ldots,x_{\sigma^{-1}(k+k')}),e'_i)\\
&=\sum_{\sigma\in Cycl}B(Q(x_{\sigma^{-1}([1,k])}),Q'(x_{\sigma^{-1}([k+1,k+k'])})).
\endaligned
$$

Consider a term in this sum, such that $k+k'$ belongs to $\sigma^{-1}([1,k])$. This term is:
$$
B(Q(x_{[r+k'+1,k+k']},x_{[1,r]}),Q'(x_{[r+1,r+k']}))=B(Q(x_{[1,r]},Q'(x_{[r+1,r+k']}),x_{[r+k'+1,k+k'-1]}),x_{k+k'}).
$$
The sum of all these terms is just $B((Q\circ Q')(x_{[1,k+k'-1]},x_{k+k'}))$.

Similarly, if $k+k'$ is in $\sigma^{-1}([k+1,k+k'])$, we get:
$$
B(Q(x_{[r+1,r+k']}),Q'(x_{[r+k+1,k+k']},x_{[1,r]}))=-B(Q'(x_{[1,r]},Q(x_{[r+1,r+k]}),x_{[r+k+1,k+k'-1]}),x_{k+k'}),
$$
and the corresponding sum is $-B((Q'\circ Q)(x_{[1,k+k'-1]},x_{k+k'}))$.

This proves:
$$
\{\Omega_Q,\Omega_{Q'}\}=\Omega_{[Q,Q']},
$$
and the proposition, since $Q\mapsto\Omega_Q$ is bijective.\\
\end{proof}

Let us now study in a more detailled way different cases, when $Q$ corresponds to an associative, or a commutative or a Lie, or a pre-Lie structure, or to an up to homotopy such structure.\\


\section{Associative Pinczon algebras}


\

\subsection{Associative quadratic algebras}

\

Suppose now $q$ is a degree 0 associative product, and $b$ is invariant, then the associated coderivation $Q$ of $\Delta$, with degree 1 is the Bar resolution of the associative algebra $(V,q)$. The associativity of $q$ is equivalent to the relation $[Q,Q]=0$.

More generally, a structure of $A_\infty$ algebra (or associative algebra up to homotopy) on the space $V$ is a degree 1 coderivation $Q$ of $\Delta$ on $\otimes^+V[1]$, such that $[Q,Q]=0$. With this last relation, the Pinczon coboundary $d_P:\Lambda\mapsto\{\Omega_Q,\Lambda\}$ is a degree 1 differential on the (graded) Lie algebra $\mathcal C(V)$. The corresponding cohomology is the Pinczon cohomology of cyclic forms. Write also $\Omega_{d_PQ}=d_P\Omega_Q$.\\

\begin{defn}
An associative Pinczon algebra $(\mathcal C(V),\{~,~\},\Omega)$ is a Pinczon bracket $\{~,~\}$ on $\mathcal C(V)$, and a degree 3 form $\Omega\in \mathcal C(V)$, such that $\{\Omega,\Omega\}=0$.\\
\end{defn}

If $\Omega$ is trilinear, then an associative Pinczon algebra is simply a quadratic associative algebra $(V,b,q)$, where $b$ is the symmetric non degenerated form coming from the Pinczon bracket, and $q$ the bilinear mapping associated to $Q$ such that $\Omega=\Omega_Q$.\\

\begin{prop}
Let $(\mathcal C(V),\{~,~\},\Omega)$ be an associative Pinczon algebra. Write $\Omega=\Omega_Q$, $Q=\sum_kQ_k$, with $Q_k:\otimes^k V[1]\rightarrow V[1]$. Then $Q$ is a structure of $A_\infty$ algebra on $V$, and each $Q_k$ is $B$-quadratic for the bilinear form $B$ coming from the bracket.

Conversely, if $(V,b)$ is a vector space with a non degenerated symmetric bilinear form, any $B$-quadratic structure $Q$ of $A_\infty$ algebra on $V$ defines an unique structure of associative Pinczon algebra on $V$.\\
\end{prop}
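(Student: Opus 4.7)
My plan is to deduce both statements from the Lie algebra isomorphism $\mathcal{D}_B \cong (\mathcal{C}(V), \{\,,\,\})$ established in the preceding proposition, combined with the correspondence between Pinczon brackets and non-degenerate symmetric bilinear forms from Section 2.2. Given those tools, the bijection $Q \mapsto \Omega_Q$ yields a unique $Q \in \mathcal{D}_B$ with $\Omega = \Omega_Q$, and decomposing $Q = \sum_k Q_k$ by arity matches each $\Omega_{Q_k}$ with the arity-$(k+1)$ homogeneous component of $\Omega$.

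The first substantive step is that each $Q_k$ is itself $B$-quadratic: projection onto a single arity commutes with cyclic permutation of the arguments, so the arity-$(k+1)$ component of the cyclic form $\Omega$ is cyclic, hence each $\Omega_{Q_k}$ is cyclic and each $Q_k \in \mathcal{D}_B$. The second step uses the Lie algebra isomorphism $\{\Omega, \Omega\} = \Omega_{[Q,Q]}$ together with the injectivity of $Q \mapsto \Omega_Q$ to translate the Pinczon-algebra identity $\{\Omega, \Omega\} = 0$ into the $A_\infty$-structure equation $[Q,Q] = 0$. The degree-$3$ condition on $\Omega$, combined with the Koszul shift of Section 2.1, forces $D_Q = \sum_k D_{Q_k}$ to have total degree $1$, exactly what an $A_\infty$-structure requires.

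The converse is then immediate: starting from $(V,b)$ and a $B$-quadratic $A_\infty$-structure $Q$, the form $\Omega := \Omega_Q$ lies in $\mathcal{C}(V)$ by $B$-quadraticity of every component, has degree $3$ by the same degree shift, and satisfies $\{\Omega,\Omega\} = \Omega_{[Q,Q]} = 0$; uniqueness is again the bijectivity of $Q \mapsto \Omega_Q$. The only piece of bookkeeping lurking behind these steps is the compatibility of the degree and arity gradings on both sides, i.e.\ $\{\mathcal{C}^{k+1}, \mathcal{C}^{k'+1}\} \subset \mathcal{C}^{k+k'}$ on the cyclic side matched with the arity decomposition of the bracket of coderivations on the $\mathcal{D}_B$ side, plus the Koszul sign rule converting $\deg(\Omega) = 3$ into $\deg(D_Q) = 1$. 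These are formal consequences of the conventions already in place, so no new computation is needed beyond what the preceding proposition already supplies.
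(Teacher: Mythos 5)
Your argument is correct and is exactly the one the paper intends (the paper in fact omits the proof, treating the proposition as an immediate consequence of the isomorphism $\{\Omega_Q,\Omega_{Q'}\}=\Omega_{[Q,Q']}$ and the bijectivity of $Q\mapsto\Omega_Q$). Your added details — that the arity-homogeneous components of a cyclic form are cyclic, and that $\deg\Omega=3$ corresponds to $\deg Q=1$ under the shift — are precisely the bookkeeping the paper leaves implicit, and they are right.
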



\subsection{Bimodules and Hochschild cohomology}

\

Suppose $(V,q)$ is an associative algebra and $M$ a bi-module. Then the Hochschild cohomology with value in $M$ is a part of the Pinczon cohomology of a natural Pinczon algebra.\\

Consider first the semidirect product $W=V\rtimes M$, that is the vector space $V\times M$, equipped with the associative product $q_W((x,a),(y,b))=(q(x,y),(x\cdot b+a\cdot y))$. The dual $W^\star=V^\star\times M^\star$ is now a $(W,q_W)$-bimodule, with:
$$
\big((x,a)\cdot f\big)(z,c)=f((z,c)(x,a)),\qquad \big(f\cdot(x,a)\big)(z,c)=f((x,a)(z,c)),
$$
or if $f=(g,h)\in V^\star\times M^\star$, $(x,a)\cdot(g,h)=(x\cdot g+a\cdot h,x\cdot h)$, $(g,h)\cdot(x,a)=(g\cdot x+h\cdot a,h\cdot x)$.

This defines a structure of algebra on the space $\tilde{V}=W\times W^\star$, namely:
$$
\tilde{q}((x,a,g,h),(x',a',g',h'))=(xx',xa'+ax',x\cdot g'+g\cdot x'+a\cdot h'+h\cdot a',x\cdot h'+h\cdot x'),
$$
and a non degenerated symmetric bilinear form $\tilde b$,
$$
\tilde{b}((x,a,g,h),(x',a',g',h'))=g(x')+h(a')+g'(x)+h'(a).
$$
Now $(\tilde{V},\tilde{b},\tilde{q})$ is a quadratic associative algebra, it is the double semi-direct product of $(V,q)$ by the bimodule $M$. As usual, $\tilde{Q}$ is associated to $\tilde{q}$, after a shifting of degree.\\





Let now $c:V^k\rightarrow M$, $k$-linear, with degree $|c|=2-k$, and  identify $C$ with $\tilde{C}:\tilde{V}[1]^k\rightarrow\tilde{V}[1]$, by putting:
$$
\tilde{C}((x_1,a_1,g_1,h_1),\ldots,(x_k,a_k,g_k,h_k))=(0,C(x_1,\ldots,x_k),\sum_{j}C_j(x_1,\ldots,h_j,\ldots,x_k),0),
$$
where $C_j(x_1,\ldots,h_j,\ldots,x_k)=h_j(C(x_{j+1},\ldots,x_k,\cdot,x_1,\ldots,x_{j-1}))\in V^\star$. A direct computation shows that $\tilde{C}$ is $\tilde{B}$-quadratic. 
A direct computation gives:
$$
d_P\tilde{Q}=[\tilde{Q},\tilde{C}]=\widetilde{[Q,C]}=\widetilde{(d_Hc)[1]},
$$
where $d_H$ is the Hochschild coboundary operator on the bimodule $M$. 

\begin{prop}

\

Let $(V,q)$ be an associative algebra, and $c\mapsto\Omega_{\tilde{C}}$ the map associating to any multilinear mapping $c$ from $V^k$ into $M$, with degree $2-k$, the cyclic form $\Omega_{\tilde{C}}$. Then this map is a complex morphism between the Hochschild cohomology for the $(V,q)$ bimodule $M$ and the Pinczon cohomology of cyclic forms $\mathcal C(\tilde{V})$ on $\tilde{V}$.\\
\end{prop}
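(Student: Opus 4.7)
My plan is to show that $c\mapsto\Omega_{\tilde C}$ is a chain map from the Hochschild complex of $(V,q)$ with values in the bimodule $M$ into the Pinczon complex $(\mathcal C(\tilde V),d_P)$ attached to the degree-$3$ cyclic form $\Omega_{\tilde Q}$. Since a chain map induces a map between the corresponding cohomologies, this yields the announced morphism.

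First I would verify that the recipe $c\mapsto\tilde C$ really lands in the $\tilde B$-quadratic subspace $\mathcal D_{\tilde B}$, so that $\Omega_{\tilde C}$ is a genuine cyclic form. Expanding $\tilde B(\tilde C(\xi_1,\ldots,\xi_k),\xi_{k+1})$ on vectors $\xi_i=(x_i,a_i,g_i,h_i)$, only the $M$-coordinate of $\tilde C$ (paired through $\tilde B$ with $h_{k+1}$) and the $V^\star$-coordinate (evaluated on $x_{k+1}$) can contribute. Both terms take the common shape $h_j\bigl(c(x_{j+1},\ldots,x_{j-1})\bigr)$ for a varying index $j\in\{1,\ldots,k+1\}$, so cyclicity of $\Omega_{\tilde C}$ reduces to the cyclic invariance of this sum, up to the Koszul signs produced by the degree shift.

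Next I would establish the key identity already announced in the excerpt:
\[
[\tilde Q,\tilde C]=\widetilde{[Q,C]}=\widetilde{(d_Hc)[1]}.
\]
The left-hand equality is a direct computation. Using the formula for $Q\circ Q'$ recalled in Section~3.1 together with the explicit bimodule action of $W$ on $W^\star$, one checks that the compositions $\tilde Q\circ\tilde C$ and $\tilde C\circ\tilde Q$ never mix the $V$- and $M$-coordinates: each outputs, in the $M$-slot, a Hochschild insertion sum, and in the $V^\star$-slot its dual cyclic transport onto the distinguished argument. Subtracting them produces $[Q,C]$ in the $M$-slot and the corresponding transported piece in the $V^\star$-slot, which is exactly how $\widetilde{[Q,C]}$ is defined. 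The right-hand equality is the classical identification, after a shift of degree by $1$, of the bracket of the bar coderivation of an associative product with a cochain with the Hochschild coboundary of that cochain.

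Combining these identities with Proposition~3.1 applied to $\tilde V$, which gives $\{\Omega_{\tilde Q},\Omega_{\tilde C}\}=\Omega_{[\tilde Q,\tilde C]}$, one obtains
\[
d_P\,\Omega_{\tilde C}=\{\Omega_{\tilde Q},\Omega_{\tilde C}\}=\Omega_{\widetilde{d_Hc}},
\]
which is the chain-map property, and the proposition follows by passing to cohomology. The main obstacle will be the sign bookkeeping in the equality $[\tilde Q,\tilde C]=\widetilde{[Q,C]}$: the Koszul signs from the degree shifts on $V$ and $\tilde V$, from the definition of $\tilde C_j$, from the bimodule actions on $W^\star$, and from the Gerstenhaber bracket must all be tracked coherently. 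Once a convention is fixed, the identification is essentially formal, since the $\tilde{\phantom c}$ construction is designed precisely so that each of the four coordinates of $\tilde V$ mirrors either a Hochschild cochain or its dual cyclic transport.
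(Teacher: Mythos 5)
Your proposal follows essentially the same route as the paper: check that $\tilde C$ is $\tilde B$-quadratic, establish $[\tilde Q,\tilde C]=\widetilde{[Q,C]}=\widetilde{(d_Hc)[1]}$ by direct computation with the explicit bimodule actions on $W^\star$, and then transport through the Lie algebra isomorphism $Q\mapsto\Omega_Q$ to get $d_P\Omega_{\tilde C}=\Omega_{\widetilde{d_Hc}}$. The paper's own argument consists of exactly these two ``direct computation'' claims followed by the same appeal to $\{\Omega_{\tilde Q},\Omega_{\tilde C}\}=\Omega_{[\tilde Q,\tilde C]}$, so nothing is missing.
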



\section{Commutative Pinczon algebras}


\

\subsection{Commutative quadratic algebras}

\

Consider a quadratic associative algebra $(V,b,q)$, but suppose now $q$ is commutative
. Consider, as above the corresponding coderivation $Q$. It is now anticommutative, with degree 1, and seen as a map $\underline\otimes^2V[1]\rightarrow V[1]$, where $\underline\otimes^2V[1]$ is the quotient of $\otimes^2V[1]$ by the $1,1$ shuffle product $sh_{1,1}(x_1,x_2)=x_1\otimes x_2+x_2\otimes x_1$.

Recall that a $p,q$ shuffle $\sigma$ is a permutation $\sigma\in\mathfrak S_{p+q}$ such that $\sigma(1)<\ldots<\sigma(p)$ and $\sigma(p+1)<\ldots<\sigma(p+q)$. Denote $Sh(p,q)$ the set of all such shuffles. Then the $p,q$ shuffle product on $\otimes^+V[1]$ is
$$
sh_{p,q}(x_{[1,p]},x_{[p+1,p+q]})=\sum_{\sigma\in Sh(p,q)}x_{\sigma^{-1}(1)}\otimes\ldots\otimes x_{\sigma^{-1}(p+q)}.
$$

Denote $\underline{\otimes}^n(V[1])$ the quotient of $\otimes^nV[1]$ by the sum of all the image of the maps $sh_{p,n-p}$ ($0<p<n$), and $\underline{x}_{[1,n]}=x_1\underline{\otimes}\ldots\underline{\otimes}x_n$ the class of $x_1\otimes\ldots\otimes x_n$, where the $x_i$ belong to $V[1]$. Finally, $\underline{\otimes}^+(V[1])$ is the sum of all $\underline{\otimes}^n(V[1])$ ($n>0$).\\

On $\underline{\otimes}^+V[1]$, there is a Lie cobracket $\delta$, defined by
$$
\delta(\underline{x}_{[1,n]})=\sum_{j=1}^{n-1}\underline{x}_{[1,j]}\bigotimes \underline{x}_{[j+1,n]}-\underline{x}_{[j+1,n]}\bigotimes \underline{x}_{[1,j]}.
$$
In fact $\delta$ is well defined on the quotient and any coderivation $Q$ of $\delta$ is characterized by its Taylor expansion $Q=\sum_k Q_k$ where each $Q_k$ is a linear map from $\underline{\otimes}^kV[1]$ into $V[1]$ (see for instance \cite{AAC1,BGHHW}).\\

\begin{defn}
A structure of $C_\infty$ algebra, or up to homotopy commutative algebra, on $V$ is a degree 1 coderivation $Q$ of $\delta$, on $\underline{\otimes}^+V[1]$, such that $[Q,Q]=0$.\\
\end{defn}

Associated to the notion of vanishing on shuffle products mapping $Q$, there is a notion of vanishing on shuffle product forms $\Omega$.

\begin{defn} 
A $(k+1)$-linear cyclic form $\Omega$ on the vector space $V[1]$ is vanishing on shuffle product if, for any $y$, $(x_1,\ldots,x_k)\mapsto \Omega(x_1,\ldots,x_k,y)$ is vanishing on shuffle product. Denote $\mathcal C_{vsp}(V)$ the space of cyclic, vanishing on shuffle product multilinear forms on $V[1]$.\\
\end{defn}

\begin{prop}
Suppose $\{~,~\}$ is a Pinczon bracket on the space $\mathcal C(V)$ of cyclic multilinear forms on $V[1]$. Then $\mathcal C_{vsp}(V)$ is a Lie subalgebra of $(\mathcal C(V),\{~,~\})$.\\
\end{prop}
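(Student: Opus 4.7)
The plan is to carry the statement across the isomorphism $Q\mapsto\Omega_Q$ of Proposition~3.1 and then invoke the standard fact that coderivations of the Lie cobracket $\delta$ on $\underline{\otimes}^+V[1]$ form a graded Lie algebra under the commutator.

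First, I would identify, under the bijection $\mathcal{C}(V)\cong\mathcal{D}_B$, the subspace $\mathcal{C}_{vsp}(V)$ with the subspace $\mathcal{D}_B^{vsp}\subset\mathcal{D}_B$ of $B$-quadratic coderivations $Q=\sum_k Q_k$ whose Taylor components $Q_k:\bigotimes^k V[1]\to V[1]$ vanish on all shuffle products. Since $\Omega_Q(x_{[1,k]},y)=B(Q_k(x_{[1,k]}),y)$ and $B$ is non degenerate in $y$, the defining property, that for every $y$ the map $(x_{[1,k]})\mapsto\Omega_Q(x_{[1,k]},y)$ vanishes on shuffle products, is equivalent to $Q_k$ vanishing on shuffle products.

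Second, I would use the fact recalled in the paper (cf.\ \cite{AAC1,BGHHW}) that a $k$-linear map on $V[1]$ vanishes on shuffle products if and only if it factors through the quotient $\underline{\otimes}^kV[1]$, and that the resulting family $\bar Q_k:\underline{\otimes}^kV[1]\to V[1]$ is precisely the Taylor datum of a coderivation of $\delta$. In particular $\mathcal{D}_B^{vsp}$ is identified with the space of $B$-quadratic coderivations of $\delta$. Since the commutator of two coderivations of any coproduct is a coderivation of that same coproduct, and since Proposition~3.1 already ensures $\mathcal{D}_B$ is itself closed under $[~,~]$, we conclude that $\mathcal{D}_B^{vsp}$ is a Lie subalgebra of $\mathcal{D}_B$. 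Transporting this via $\{\Omega_Q,\Omega_{Q'}\}=\Omega_{[Q,Q']}$ yields the proposition.

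The main obstacle is really the stability-under-commutator step: directly verifying that, when $Q$ and $Q'$ vanish on shuffles, $[Q,Q']$ evaluated on an arbitrary shuffle $sh_{p,q}(x_{[1,p]},x_{[p+1,p+q]})$ with $p+q=k+k'-1$ again vanishes. A brute-force check would combinatorially regroup the terms appearing in $Q\circ Q'(x_{[1,k+k'-1]})=\sum_r Q(x_{[1,r]},Q'(x_{[r+1,r+k']}),x_{[r+k'+1,k+k'-1]})$ and exhibit the outputs as lying in the shuffle ideal by pairing positions. I would sidestep this tedious accounting by appealing to the cited result that the shuffle ideal is a coideal for $\Delta$, so that $\Delta$ descends to $\delta$ on the quotient $\underline{\otimes}^+V[1]$ and coderivations of $\delta$ are automatically closed under the commutator.
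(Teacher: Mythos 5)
Your proposal is correct and follows essentially the same route as the paper: both reduce the statement via the bijection $\Omega_Q\leftrightarrow Q$ (using non-degeneracy of $B$ to see that $\Omega_Q\in\mathcal C_{vsp}(V)$ iff $Q$ vanishes on shuffle products) and then appeal to the cited literature for the key fact that the coderivation commutator of two vanishing-on-shuffles maps again vanishes on shuffle products. Your reformulation of that fact in terms of the quotient cobracket $\delta$ is a harmless repackaging of the same external input (the paper cites [AAC2] for it directly), though be aware that one must still check that the commutator computed upstairs in $(\otimes^+V[1],\Delta)$ agrees with the one downstairs for $\delta$ --- which is precisely what the cited result provides.
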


\begin{proof}
In fact, the Pinczon bracket defines a non degenerate form $b$ on $V$, thus $B$ on $V[1]$, any form $\Omega$ can be written as $\Omega=\Omega_Q$, with
$$
\Omega_Q(x_1,\ldots,x_{k+1})=B(Q(x_1,\ldots,x_k),x_{k+1}).
$$

Therefore $\Omega$ is in $\mathcal C_{vsp}(V)$ if and only if $Q$ is vanishing on shuffle products. Now, in \cite{AAC2} it is shown that if $Q$, $Q'$ are vanishing on shuffle products, then $[Q,Q']$ is also vanishing on shuffle products. This proves the proposition.\\ 
\end{proof}

\begin{defn}
A commutative Pinczon algebra $(\mathcal C(V),\{~,~\},\Omega)$ is a Pinczon bracket $\{~,~\}$ on $\mathcal C(V)$, and a degree 3 form $\Omega\in \mathcal C_{vsp}(V)$, such that $\{\Omega,\Omega\}=0$.\\
\end{defn}

As for associative algebra, a commutative Pinczon algebra with $\Omega$ trilinear is simply a quadratic commutative algebra $(V,q,b)$.\\

\begin{prop}
Let $(\mathcal C(V),\{~,~\},\Omega)$ be an commutative Pinczon algebra. Write $\Omega=\Omega_Q$, $Q=\sum_kQ_k$, with $Q_k:\underline{\otimes}^kV[1]\rightarrow V[1]$. Then $Q$ is a structure of $C_\infty$ algebra on $V$, and each $Q_k$ is $B$-quadratic for the bilinear form $B$ coming from the bracket.

Conversely, if $(V,b)$ is a vector space with a non degenerated symmetric bilinear form, any $B$-quadratic structure $Q$ of $C_\infty$ algebra on $V$ defines an unique structure of commutative Pinczon algebra on $V$.\\
\end{prop}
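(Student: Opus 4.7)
The plan is to translate all conditions via the bijection $Q \mapsto \Omega_Q$ between $B$-quadratic multilinear maps and cyclic forms established in the previous section, together with the compatibility $\{\Omega_Q, \Omega_{Q'}\} = \Omega_{[Q,Q']}$. The preceding proposition on $\mathcal C_{vsp}(V)$ already supplies the bridge between ``vanishing on shuffles'' for forms and for multilinear maps, so essentially all that remains is to reorganize these identifications by arity and track the degrees.

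For the direct implication, I would first invoke the bijection to write $\Omega = \Omega_Q$ with $Q$ the unique $B$-quadratic map determined by $\Omega$, and split $Q = \sum_k Q_k$ by arity. Because the operation $Cycl$ preserves arity, each summand $\Omega_{Q_k}$ is itself cyclic, so each $Q_k$ is $B$-quadratic. Using the degree relation $\deg \Omega_{Q_k} = \deg B + \deg Q_k = 2 + \deg Q_k$, the hypothesis $\deg \Omega = 3$ forces $\deg Q_k = 1$, which is the degree needed for the coderivation of a strong homotopy structure. Next, the hypothesis $\Omega \in \mathcal C_{vsp}(V)$ translates, by non-degeneracy of $B$ and exactly as invoked in the proof of the previous proposition, into vanishing on shuffle products of each $Q_k$; hence each $Q_k$ factors through $\underline{\otimes}^k V[1]$ and $Q$ becomes a coderivation of the Lie coproduct $\delta$. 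Finally, $\{\Omega,\Omega\} = 0$ combined with $\{\Omega_Q,\Omega_Q\} = \Omega_{[Q,Q]}$ and the injectivity of $Q \mapsto \Omega_Q$ gives $[Q,Q] = 0$, so $Q$ is a $B$-quadratic $C_\infty$ structure on $V$. The converse is obtained by running the same chain of equivalences in reverse: from a $B$-quadratic $C_\infty$ structure $Q = \sum Q_k$, set $\Omega = \Omega_Q$; each $\Omega_{Q_k}$ is then cyclic (by $B$-quadraticity), vanishing on shuffle products (since $Q_k$ is), of degree $3$, and $\{\Omega,\Omega\} = \Omega_{[Q,Q]} = 0$.

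I do not anticipate a serious obstacle, as every ingredient has been put in place by the previous two propositions. The only point that requires attention is the bookkeeping between ``vanishing on shuffle products'' for a multilinear map valued in $V[1]$ and the same property for the associated scalar-valued cyclic form: this equivalence is precisely the non-degeneracy argument used in the proof of the previous proposition, via $\Omega_Q(x_1,\dots,x_k,y) = B(Q(x_1,\dots,x_k),y)$. Beyond this, the statement is essentially a packaging result, and the proof reduces to a short sequence of ``if and only if'' steps applied arity by arity.
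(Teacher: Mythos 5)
Your proposal is correct and follows exactly the route the paper intends: the proposition is stated without a separate proof precisely because it is the assembly, arity by arity, of the bijection $Q\mapsto\Omega_Q$, the identity $\{\Omega_Q,\Omega_{Q'}\}=\Omega_{[Q,Q']}$, the degree count $\deg\Omega_Q=2+\deg Q$, and the equivalence (via non-degeneracy of $B$) between vanishing on shuffle products for $\Omega_Q$ and for $Q$ established in the preceding proposition. Nothing is missing.
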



\subsection{(Bi)modules and Harrison cohomology}

\

Any module $M$ on a commutative algebra $(V,q)$ is a bimodule where right and left action are coinciding. Let now $(V,q)$ be a commutative algebra, and $M$ a $(V,q)$-module. Repeat the preceding construction of 
the quadratic associative algebra $(\tilde{V},\tilde{b},\tilde{q})$. Now $(\tilde{V},\tilde{b},\tilde{q})$ is commutative.\\





As above, look now for a $k$-linear mapping $c$ from $V^k$ into $M$, with degree $2-k$ and vanishing on shuffle products, denote $\tilde{C}$ the corresponding map $\tilde{V}^k\rightarrow \tilde{V}$, with degree 1,
We saw that $d_P\tilde{C}=[\tilde{Q},\tilde{C}]=\widetilde{[Q,C]}=\widetilde{d_Hc[1]}$. If we restrict this to vansihing on shuffle products map $c$, this is the Harrison coboundary $\widetilde{d_{Ha}c[1]}$.\\ 

\begin{prop}
Let $(V,q)$ be a commutative algebra, and $c\mapsto\Omega_{\tilde{C}}$ the map associating to any multilinear mapping $c$ from $V^k$ into $M$, with degree $2-k$, the cyclic form $\Omega_{\tilde{C}}$. Then this map is a complex morphism between the Harrison cohomology for the $(V,q)$ bimodule $M$ and the Pinczon cohomology of cyclic forms $\mathcal C(\tilde{V})$ on $\tilde{V}$.\\
\end{prop}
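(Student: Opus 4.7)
The plan is to deduce this from the associative version (the analogous proposition in Section 4) by checking that the map $c\mapsto\tilde C$ is compatible with the shuffle-vanishing condition on both sides.

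The associative-case proposition already provides a complex morphism
$$
c\ \longmapsto\ \Omega_{\tilde C},\qquad d_P\Omega_{\tilde C}=\Omega_{\widetilde{(d_Hc)[1]}},
$$
from the Hochschild complex of $(V,q)$ with values in $M$ to $(\mathcal C(\tilde V),d_P)$. Since $(V,q)$ is commutative, $\tilde q$ is commutative as well, so $\Omega_{\tilde Q}\in\mathcal C_{vsp}(\tilde V)$ and, by the earlier stability result, $\mathcal C_{vsp}(\tilde V)$ is preserved by $d_P=\{\Omega_{\tilde Q},\cdot\}$. It therefore suffices to verify that $c\mapsto\Omega_{\tilde C}$ sends Harrison cochains into $\mathcal C_{vsp}(\tilde V)$: once this is known, the coboundary identity follows from the associative case, the bijectivity of $C\mapsto\Omega_C$ forces $d_Hc$ to be shuffle-vanishing whenever $c$ is, and consequently $d_H$ agrees with $d_{Ha}$ on the Harrison subcomplex.

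The core step I would carry out is this verification: if $c$ vanishes on shuffle products, then $\tilde C$ vanishes on shuffle products. With $w_i=(x_i,a_i,g_i,h_i)$, the non-zero components of $\tilde C(w_1,\dots,w_k)$ are $C(x_1,\dots,x_k)\in M$ and $\sum_j h_j\bigl(C(x_{j+1},\dots,x_k,\cdot,x_1,\dots,x_{j-1})\bigr)\in V^\star$. Applying a non-trivial shuffle of the $w_i$'s permutes the $x_i$'s by the same shuffle, so the $M$-component vanishes by hypothesis on $c$. For the $V^\star$-component, pairing $\tilde C$ against a test argument $w_{k+1}$ and unwinding the definitions reduces the required vanishing to the fact that cyclic rotations of a shuffle-vanishing cochain remain shuffle-vanishing when paired cyclically -- exactly the combinatorics underlying the stability of $\mathcal C_{vsp}$ used in the previous section.

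The main obstacle will be precisely this shuffle-vanishing check for the $V^\star$-component of $\tilde C$: the $C_j$ summands present $c$ evaluated on cyclically rotated arguments, so one cannot apply the shuffle-vanishing of $c$ position-by-position but must invoke the cyclic combinatorics of shuffles. Once this is settled, the rest is formal: the associative case provides $d_P\Omega_{\tilde C}=\Omega_{\widetilde{(d_Hc)[1]}}$, stability of $\mathcal C_{vsp}$ under $d_P$ identifies the right-hand side with $\Omega_{\widetilde{(d_{Ha}c)[1]}}$, and $c\mapsto\Omega_{\tilde C}$ is the required complex morphism from the Harrison complex to $(\mathcal C(\tilde V),d_P)$.
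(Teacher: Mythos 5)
Your proposal is correct and follows essentially the same route as the paper, which simply restricts the associative-case identity $d_P\tilde{C}=[\tilde{Q},\tilde{C}]=\widetilde{(d_Hc)[1]}$ to shuffle-vanishing cochains, where $d_H$ becomes the Harrison coboundary $d_{Ha}$. In fact you are more careful than the paper, which states the result without proof: you correctly isolate the one point needing verification, namely that the $V^\star$-component of $\tilde C$ (built from cyclic rotations of $c$) stays shuffle-vanishing, which is the same cyclic-shuffle combinatorics already invoked for the stability of $\mathcal C_{vsp}$.
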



\section{Pinczon Lie algebras}


\

\subsection{Quadratic Lie algebras}

\

Suppose now $(V,q)$ is a (graded) Lie algebra. Then the corresponding Bar resolution consists in replacing the space $\otimes^+V[1]$ of tensors by the subspace $S^+(V[1])$ of symmetric tensors, spanned by the symmetric products $x_1\cdot\ldots\cdot x_k=\sum_{\sigma\in\mathfrak S_k}x_{\sigma(1)}\otimes\ldots\otimes x_{\sigma(k)}$. Then, the natural comultiplication $\Delta$ on $S^+(V[1])$ is:
$$
\Delta(x_1\cdot\ldots\cdot x_k)=\sum_{\begin{smallmatrix}I\sqcup J=[1,k]\\
0<\#I<k\end{smallmatrix}} x_{\cdot I}\bigotimes x_{\cdot J},
$$
where, if $I=\{i_1<\ldots<i_r\}$, $x_{\cdot I}$ means $x_{i_1}\cdot\ldots\cdot x_{i_r}$.

As above, any coderivation $Q$ of the comultiplication $\Delta$ is characterized by its Taylor coefficients $Q_k:S^k(V[1])\rightarrow V[1]$. The bracket of two such coderivations $Q$, $Q'$ becomes:
$$
[Q,Q'](x_1\cdot\ldots\cdot x_{k+k'-1})=\hskip-0.5cm\sum_{\begin{smallmatrix}I\sqcup J=[1,k+k'-1]\\
\#J=k'\end{smallmatrix}}\hskip-0.5cm Q(Q'(x_{\cdot J})\cdot x_{\cdot I})-\hskip-0.5cm\sum_{\begin{smallmatrix}I\sqcup J=[1,k+k'-1]\\
\#I=k\end{smallmatrix}}\hskip-0.5cm Q'(Q(x_{\cdot I})\cdot x_{\cdot J}),
$$

Now to the Lie bracket $q$ on $V$ is associated a map $Q:S^2(V[1])\rightarrow V[1]$, thus a degree 1 coderivation, still denoted $Q$, of $\Delta$ and the Jacobi identity for $q$ is equivalent to the relation $[Q,Q]=0$. The corresponding cohomology is the Chevalley cohomology.\\

For any $k$, consider $S^k(V[1])$ as a subspace of $\otimes^k V[1]$, with a projection $Sym:\otimes^k V[1]\rightarrow S^k(V[1])$:
$$
Sym(x_1\otimes\ldots\otimes x_k)=x_1\cdot\ldots\cdot x_k.
$$

Denote $L(\otimes^kV[1],V[1])$ the of linear maps $Q:\otimes^kV[1]\rightarrow V[1]$. By restriction to $S^k(V[1])$, $Q$ defines a symmetric map, denote it $Q^{Sym}$. Then $Q^{Sym}=Q\circ Sym$, and the space $L(S^k(V[1]),V[1])$ of symmetric maps is a quotient of $L(\otimes^kV[1],V[1])$.\\

Similarly, the restriction of $\Omega\in \mathcal C^{k+1}$ is a symmetric form, denote it $\Omega^{Sym}$: $\Omega^{Sym}=\Omega\circ Sym$.
as above, the space $\mathcal C|_S(V)$ of the restrictions of cyclic multilinear forms to $S^+(V[1])$ is a quotient of $\mathcal C(V)$.\\

Suppose there is a Pinczon bracket $\{~,~\}$ on $\mathcal C(V)$, then any $\Omega$ can be written $\Omega=\Omega_Q$, with $Q$ $k$-linear and $B$-quadratic. But, any element $\sigma$ in $\mathfrak S_{k+1}$ can be written in an unique way as a product $\tau\circ\rho$ where $\tau$ is in $\mathfrak S_k$, viewed as a subgroup of $\mathfrak S_{k+1}$ and $\rho$ in $Cycl$. Therefore, with our notation,
$$
\Omega^{Sym}=(\Omega_Q)^{Sym}=(\Omega_{Q^{Sym}})^{Cycl}=(k+1)\Omega_{Q^{Sym}}.
$$

\begin{prop}
The bracket defined on $L(\otimes^+V[1],V[1])$ by the commutator of coderivations in $(\otimes^+V[1],\Delta)$, induces a well defined bracket on 
 $L(S^+(V[1]),V[1])$, this bracket is the above commutator of coderivations in $(S^+(V[1]),\Delta)$:
$$
\left[Q^{Sym},(Q')^{Sym}\right]=[Q,Q']^{Sym}.
$$

Any Pinczon braket $\{~,~\}$ on $\mathcal C(V)$ induces a well defined bracket on the quotient $\mathcal C|_S(V)$, this bracket denoted $\{~,~\}|$ is:
$$
\left\{\Omega^{Sym},(\Omega')^{Sym}\right\}|=\left(\{\Omega,\Omega'\}\right)^{Sym}=\frac{k+k'}{(k+1)!(k'+1)!} \sum_i\iota_{e_i}\left(\Omega^{Sym}\right)\cdot\iota_{e'_i}\left((\Omega')^{Sym}\right).
$$
\end{prop}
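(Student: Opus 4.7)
The plan is to reduce both assertions to a single combinatorial identity between the commutator of multilinear maps and the coderivation bracket on $(S^+(V[1]),\Delta)$, exploiting the Lie algebra isomorphism $\mathcal{C}(V)\cong\mathcal{D}_B$ of Proposition 3.2. First I would show that the commutator $[Q,Q']=Q\circ Q'-Q'\circ Q$ on $L(\otimes^+V[1],V[1])$ descends to $L(S^+(V[1]),V[1])$ and coincides there with the commutator of coderivations of $(S^+(V[1]),\Delta)$. Concretely, evaluating $[Q,Q']\circ Sym$ on a generator and regrouping the double sum over insertion positions and internal permutations according to the subset $J\subset[1,k+k'-1]$ of size $k'$ that fills the argument of $Q'$, the $k'!$ internal orderings of $J$ assemble into $x_{\cdot J}$, while the $(k-1)!$ orderings of the complement together with the $k$ choices of position for $Q'$ assemble into the symmetric product $Q'(x_{\cdot J})\cdot x_{\cdot I}$. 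The expression collapses to $\sum_{I\sqcup J,\,\#J=k'}Q(Q'(x_{\cdot J})\cdot x_{\cdot I})$, which is exactly the coderivation bracket on $(S^+(V[1]),\Delta)$; in particular $[Q,Q']^{Sym}$ depends only on $Q^{Sym}$ and $(Q')^{Sym}$, so the bracket descends to the quotient.

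Applied to the $B$-quadratic coderivations attached to $\Omega=\Omega_Q$ and $\Omega'=\Omega_{Q'}$ through Proposition 3.2, the Pinczon identity $\{\Omega,\Omega'\}=\Omega_{[Q,Q']}$ then symmetrizes to
$$\{\Omega,\Omega'\}^{Sym}\;=\;(k+k')\,\Omega_{[Q,Q']^{Sym}}\;=\;(k+k')\,\Omega_{[Q^{Sym},(Q')^{Sym}]},$$
where the first equality uses the identity $\Omega_R^{Sym}=(\ell+1)\Omega_{R^{Sym}}$ recorded just above the proposition for any $\ell$-linear $R$. This simultaneously yields the well-definedness of $\{\,,\,\}|$ on $\mathcal{C}|_S(V)$ and its identification with the coderivation bracket on $(S^+(V[1]),\Delta)$.

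For the explicit formula I would apply $Sym$ to $\{\Omega,\Omega'\}=\sum_i\iota_{e_i}\Omega\odot\iota_{e'_i}\Omega'$. Since every $\sigma\in\mathfrak{S}_{k+k'}$ factors uniquely as an element of $\mathfrak{S}_k\times\mathfrak{S}_{k'}$ acting on the two tensor blocks followed by a cycle in $Cycl$, one has $Sym(A^{Cycl})=(k+k')\,Sym(A)$, yielding $\{\Omega,\Omega'\}^{Sym}=(k+k')\sum_i Sym\bigl(\iota_{e_i}\Omega\otimes\iota_{e'_i}\Omega'\bigr)$. Using cyclicity of $\Omega$ and of $\Omega'$, the $k+1$ possible positions of $e_i$ in $\Omega^{Sym}(e_i,x_1,\ldots,x_k)$ contribute identically, giving $\iota_{e_i}(\Omega^{Sym})=(k+1)\,(\iota_{e_i}\Omega)^{Sym}$ and similarly for $\Omega'$. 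Converting the outer symmetrization of the tensor product into the symmetric product of forms introduces the usual $(k+1)!(k'+1)!$ normalization, and after cancellation with the factors $(k+1)(k'+1)$ just produced, the stated coefficient $\frac{k+k'}{(k+1)!(k'+1)!}$ remains. The main technical obstacle will be the Koszul sign bookkeeping: symmetry on $V[1]$ is antisymmetry on $V$, the cyclic relation carries the sign $(-1)^{x_{k+1}(x_1+\cdots+x_k)}$, and the biorthogonal basis $b(e_i,e'_j)=\delta_{ij}$ is not symmetric in its indices. I would dispose of these once and for all by working on $V[1]$ via the $\eta$-factor of \S2.1, so that each identity above reduces to an ungraded counting argument.
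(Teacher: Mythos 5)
Your proposal is essentially the paper's own proof: the same regrouping of the pair (insertion position, permutation) into a partition $I\sqcup J$ with internal orderings $\tau\in\mathfrak S_{k'}$, $\rho\in\mathfrak S_k$ for the first identity, the same use of $\{\Omega,\Omega'\}=\Omega_{[Q,Q']}$ together with $\Omega_R^{Sym}=(\ell+1)\Omega_{R^{Sym}}$, and the same count $(k+1)(k'+1)\cdot k!\,k'!=(k+1)!(k'+1)!$ for the normalization. One small slip: your claimed unique factorization of every $\sigma\in\mathfrak S_{k+k'}$ through $(\mathfrak S_k\times\mathfrak S_{k'})\cdot Cycl$ fails by cardinality for $k,k'\geq 2$, but the identity $Sym(A^{Cycl})=(k+k')\,Sym(A)$ you draw from it holds anyway, simply because full symmetrization is unchanged by precomposition with each cyclic shift.
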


\begin{proof}
The first assertion is a simple computation:
$$\aligned
(Q\circ Q')(x_{[1,k+k'-1]})&=\sum_{r=1}^{k-1}Q(x_{[1,r-1]}\otimes Q'(x_{[r,r+k'-1]})\otimes x_{[r+k',k+k'-1]}).
\endaligned
$$

Then, 
$$
(Q\circ Q')^{Sym}(x_{\cdot[1,k+k'-1]})=
\sum_{\begin{smallmatrix}r\\ \sigma\in\mathfrak S_{k+k'-1}\end{smallmatrix}}Q(x_{\sigma^{-1}([1,r-1])},Q'(x_{\sigma^{-1}([r,r+k'-1])}),x_{\sigma^{-1}([r+k',k+k'-1])})\\
$$

Fix $r$ and $\sigma$, put $J=\sigma^{-1}([r,r+k'-1])=\{j_1<\ldots<j_{k'}\}$, and $j_{\tau^{-1}(t)}=\sigma^{-1}(r+t-1)$ ($1\leq t\leq k'$). This define $\sigma\in\mathfrak S_{k'}$. Similarly, put $I=[1,k+k'-1]\setminus J$, $\{0\}\cup I=\{i_1<\ldots<i_k\}$ and define $\rho\in\mathfrak S_k$ by:
$$
i_{\rho^{-1}(t)}=\sigma^{-1}(t)\quad (1\leq t\leq r-1),\quad i_{\rho^{-1}(r)}=0, \quad i_{\rho^{-1}(t)}=\sigma^{-1}(t-1)\quad (r+1\leq t\leq k).
$$
The correspondence $(r,\sigma)\mapsto(r,\tau,\rho)$ is one-to-one and, suming up, we get:
$$
(Q\circ Q')^{Sym}(x_{\cdot[1,k+k'-1]})=\sum_{\begin{smallmatrix}I\sqcup J=[1,k+k'-1]\\
\#J=k'\end{smallmatrix}}Q^{Sym}((Q')^{Sym}(x_{\cdot J})\cdot x_{\cdot I}).
$$
The commutator of coderivation of $(S^+(V[1]),\Delta)$ is the quotient bracket:
$$
[Q,Q']^{Sym}=\left[Q^{Sym},(Q')^{Sym}\right].
$$

Let now $\{~,~\}$ be a Pinczon bracket on the space $\mathcal C(V)$. Thus any cyclic form $\Omega$ is written as $\Omega_Q$. Then:
$$
\{\Omega,\Omega'\}^{Sym}=\{\Omega_Q,\Omega_{Q'}\}^{Sym}=\Omega_{[Q,Q']}^{Sym}=(k+k')\Omega_{[Q,Q']^{Sym}}=(k+k')\Omega_{\left[Q^{Sym},(Q')^{Sym}\right]}.
$$
The last bracket is the commutator of coderivations in $S^+(V[1])$, thus
$$\aligned
\{\Omega,\Omega'\}^{Sym}(x_{\cdot[1,k+k']})&=
(k+k')\sum_{\begin{smallmatrix}I\sqcup J=[1,k+k']\\ \#J=k'\end{smallmatrix}}B(Q^{Sym}(x_{\cdot I}),(Q')^{Sym}(x_{\cdot J})).
\endaligned
$$

On the other hand,
$$
\iota_{e_i}\left(\Omega_Q^{Sym}\right)(x_{\cdot[1,k]})=(k+1)B(Q^{Sym}(x_{\cdot[1,k]}),e_i)=(k+1)\left(\iota_{e_i}\Omega\right)^{Sym}(x_{\cdot[1,k]}).
$$
Therefore
$$\aligned
\sum_i\iota_{e_i}\left(\Omega_Q^{Sym}\right)\cdot\iota_{e'_i}\left(\Omega_{Q'}^{Sym}\right)(x_{\cdot[1,k+k']})&=\sum_i\left(\iota_{e_i}\left(\Omega_Q^{Sym}\right)\otimes\iota_{e'_i}\left(\Omega_{Q'}^{Sym}\right)\right)^{Sym}(x_{\cdot[1,k+k']})\\
&\hskip-1.8cm=(k+1)!(k'+1)!\sum_{\begin{smallmatrix}I\sqcup J=[1,k+k']\\ \#J=k'\end{smallmatrix}}B(Q^{Sym}(x_{\cdot I}),(Q')^{Sym}(x_{\cdot J}))\\
\endaligned
$$
This achieves the proof.
\end{proof}

Explicitely, if the forms $\Omega$ and $\Omega'$ are symmetric, 
$\Omega^{Sym}=(k+1)!\Omega$ and the Pinczon bracket on the quotient becomes the bracket defined in \cite{PU,DPU}:
$$
\{\Omega,\Omega'\}|=(k+k')\sum_i\iota_{e_i}\Omega\cdot\iota_{e'_i}\Omega'.
$$

\subsection{Quadratic $L_\infty$ algebras}

\

As above,

\begin{defn}
A Pinczon Lie algebra $(\mathcal C(V),\{~,~\},\Omega)$ is a Pinczon bracket $\{~,~\}$ on $\mathcal C(V)$, and a degree 3 symmetric form $\Omega\in \mathcal C(V)^{Sym}$, such that $\{\Omega,\Omega\}=0$.\\
\end{defn}

Since a structure of $L_\infty$ algebra (or Lie algebra up to homotopy) on $V$ is a degree 1 coderivation $Q$ of $(S^+(V[1]),\Delta)$, such that the commutator $[Q,Q]$ vanishes. With the preceding computations, a Pinczon Lie algebra is in fact a quadratic $L_\infty$ algebra.\\

\begin{prop}
Let $(\mathcal C(V),\{~,~\},\Omega)$ be a Pinczon Lie algebra. Write $\Omega=\Omega_Q$, $Q=\sum_kQ_k$, with $Q_k:S^k(V[1])\rightarrow V[1]$. Then $Q$ is a structure of $L_\infty$ algebra on $V$, and each $Q_k$ is $B$-quadratic for the bilinear form $B$ coming from the bracket.

Conversely, if $(V,b)$ is a vector space with a non degenerated symmetric bilinear form, any $B$-quadratic structure $Q$ of $L_\infty$ algebra on $V$ defines an unique structure of commutative Pinczon algebra on $V$.\\
\end{prop}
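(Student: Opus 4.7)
The plan is to reduce this proposition to the already-established correspondence of Proposition 3.1 and the quotient construction of the previous proposition, exploiting the fact that symmetric cyclic forms correspond exactly to $B$-quadratic maps that factor through the symmetric coalgebra $S^+(V[1])$.

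For the direct implication, let $(\mathcal C(V),\{~,~\},\Omega)$ be a Pinczon Lie algebra. By Proposition 3.1 the non\-degenerate bilinear form $b$ coming from the bracket gives an isomorphism $\mathcal C(V)\simeq \mathcal D_B$; write $\Omega=\Omega_Q$ with $Q=\sum_k Q_k$ and observe that each homogeneous component $\Omega_{Q_k}$ remains cyclic, so each $Q_k$ is $B$-quadratic. The graded symmetry of the degree $3$ form $\Omega$ means $\Omega_Q(x_{\sigma(1)},\ldots,x_{\sigma(k+1)})=\Omega_Q(x_1,\ldots,x_{k+1})$ (with Koszul signs) for every $\sigma\in\mathfrak S_{k+1}$; combined with the symmetry and non\-degeneracy of $B$, this forces each $Q_k$ to be graded-symmetric in its $k$ arguments and hence to factor as a map $Q_k\colon S^k(V[1])\to V[1]$. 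Thus $Q$ is a degree $1$ coderivation of $(S^+(V[1]),\Delta)$. Finally, applying the Lie algebra isomorphism $\{\Omega_Q,\Omega_{Q'}\}=\Omega_{[Q,Q']}$ of Proposition 3.1 with $Q=Q'$ and using injectivity of $Q\mapsto\Omega_Q$, the equation $\{\Omega,\Omega\}=0$ translates into $[Q,Q]=0$, which is exactly the defining identity of an $L_\infty$ structure.

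The converse runs through the same bijection in the opposite direction: given a $B$-quadratic $L_\infty$ structure $Q=\sum_k Q_k$ with $Q_k\colon S^k(V[1])\to V[1]$, view each $Q_k$ as a graded-symmetric map on $\otimes^k V[1]$ and set $\Omega=\Omega_Q$. Cyclicity is the $B$-quadratic hypothesis; graded symmetry of $\Omega$ follows from graded symmetry of $Q$ together with the symmetry of $B$, putting $\Omega$ in $\mathcal C(V)^{Sym}$; and $\{\Omega,\Omega\}=\Omega_{[Q,Q]}=0$ yields the Pinczon Lie algebra structure, with uniqueness again being injectivity of $Q\mapsto\Omega_Q$.

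The main point that requires care is the equivalence between graded symmetry of $\Omega_Q(x_1,\ldots,x_{k+1})=B(Q(x_1,\ldots,x_k),x_{k+1})$ in all $k+1$ arguments and graded symmetry of $Q$ in its $k$ arguments. Cyclicity already handles the permutations which shift the last slot, so the remaining work is to check invariance under transpositions of consecutive arguments among the first $k$; this reduces to the corresponding symmetry of $Q$ by using that $B$ is symmetric and non\-degenerate together with the Koszul sign rule, and is exactly the computation which, in the quadratic $k=2$ case, identified symmetric $\Omega_Q$ with $q$ being a Lie bracket.
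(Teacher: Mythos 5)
Your overall strategy (translate everything through $Q\mapsto\Omega_Q$ and read off the structure equation) is the natural one, but there is a genuine gap at the decisive step. You invoke the isomorphism $\{\Omega_Q,\Omega_{Q'}\}=\Omega_{[Q,Q']}$ from the section on coderivations, in which $[Q,Q']$ is the commutator of coderivations of the \emph{tensor} coalgebra $(\otimes^+V[1],\Delta)$, and conclude that $\{\Omega,\Omega\}=0$ translates into $[Q,Q]=0$, ``which is exactly the defining identity of an $L_\infty$ structure.'' It is not. For a symmetric $Q$, the vanishing of the tensor-coalgebra bracket $[Q,Q]$ is an $A_\infty$-type condition; the $L_\infty$ condition is the vanishing of $[Q^{Sym},Q^{Sym}]$, the commutator of coderivations of $(S^+(V[1]),\Delta)$, equivalently of the symmetrization $[Q,Q]^{Sym}$, which is strictly weaker. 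Concretely, for an honest non-abelian quadratic Lie algebra with $I(x,y,z)=b([x,y],z)$, one has $\{I,I\}\neq 0$ in $\mathcal C(V)$ even though $\{I,I\}|=0$ in the quotient; under your reading the converse half of the proposition would simply be false.

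What you are missing is that in the definition of a Pinczon Lie algebra, $\Omega$ lives in the quotient $\mathcal C(V)^{Sym}=\mathcal C|_S(V)$ and the equation $\{\Omega,\Omega\}=0$ is taken for the induced bracket $\{~,~\}|$, not for the restriction of the Pinczon bracket of $\mathcal C(V)$ to symmetric forms (these differ: the cyclic product of two symmetric forms is only cyclically, not fully, symmetrized). The intended route is the proposition immediately preceding this one, which establishes $[Q,Q']^{Sym}=\bigl[Q^{Sym},(Q')^{Sym}\bigr]$ and $\{\Omega,\Omega'\}^{Sym}=(k+k')\,\Omega_{[Q^{Sym},(Q')^{Sym}]}$. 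With these identities, and since the field has characteristic $0$ and $B$ is non degenerate, $\{\Omega^{Sym},\Omega^{Sym}\}|=0$ is equivalent to $[Q^{Sym},Q^{Sym}]=0$ in the symmetric coalgebra, which is the $L_\infty$ equation; both directions of the proposition then follow. Your identification of symmetric cyclic forms with graded-symmetric $B$-quadratic maps is correct and is indeed the other half of the argument; you only need to replace the appeal to the tensor-coalgebra isomorphism by an appeal to the quotient-bracket proposition.
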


\subsection{Modules and Chevalley cohomology}

\

Let $(V,q)$ be a Lie algebra, and $M$ a $(V,q)$-module. To refind the corresponding Chevalley coboundary operator $d_{Ch}$, build, as above, the double semidirect product of $V$ by $M$.\\

First, consider the semidirect product $W=V\rtimes M$, that is the vector space $V\oplus M$, equipped with the Lie bracket $q_W((x,a),(y,b))=([x,y],x\cdot b-y\cdot a)$; its dual $W^\star$ is also a $(W,q_W)$-module.

Then $\tilde{V}=W\times V\times W^\star$ is a quadratic Lie algebra for the bracket
$$
\tilde{q}((x,a,g,h),(x',a',g',h'))=([x,x'],x\cdot a'-x'\cdot a,x\cdot g'-x'\cdot g+a\cdot h'-a'\cdot h,x\cdot h'-x'\cdot h),
$$
and the non degenerated symmetric bilinear form $\tilde b$
$$
\tilde{b}((x,a,g,h),(x',a',g',h'))=g(x')+h(a')+g'(x)+h'(a).
$$
A direct computation (see \cite{BB,MR}) shows that $(\tilde{V},\tilde{b},\tilde{q})$ is a quadratic Lie algebra, the double semidirect product of $(V,q)$ by its module $M$.\\




Look now for a  skew-symmetric $k$-linear mapping $c$ from $V^k$ into $M$, with degree $|c|=2-k$. Associate to it the mapping:
$$
\tilde{C}((x_1,a_1,g_1,h_1),\ldots,(x_k,a_k,f_k,g_k))=(0,C(x_1,\ldots,x_k),\sum_{j=1}^kC_j(x_1,\ldots,h_j,\ldots,x_k),0).
$$
Clearly, $\tilde{C}$ is totaly symmetric from $\tilde{V}[1]^k$ into $\tilde{V}[1]$. More precisely $\widetilde{C^{Sym}}={\tilde{C}}^{Sym}$.

Then, 
$$
[\tilde{Q},\tilde{C}]^{Sym}={\widetilde{[Q,C]}}^{Sym}=\widetilde{[Q,C]^{Sym}}=\widetilde{d_{Ch}c[1]}.
$$
Now, if $d_P$ is the Pinczon coboundary operator, defined by:
$$
\{\Omega_{\tilde{Q}},\Omega_{\tilde{C}}\}|=\Omega_{d_P\tilde{C}},
$$
this can be written $d_P\tilde{C}=(2+k)\widetilde{d_{Ch}c[1]}$.\\

\begin{prop}
Let $(V,q)$ be a Lie algebra, and $c\mapsto\Omega_{\tilde{C}}$ the map associating to any multilinear skewsymmetric mapping $c$ from $V^k$ into $M$, with degree $2-k$, the symmetric form $\Omega_{\tilde{C}}$. Then this map is a complex morphism between the Chevalley cohomology for the $(V,q)$ module $M$ and the Pinczon cohomology of symmetric forms $\mathcal C(\tilde{V})|_S$ on $\tilde{V}$.\\
\end{prop}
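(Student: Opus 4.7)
The plan is to exploit the identity $d_P\tilde{C}=(k+2)\,\widetilde{d_{Ch}c[1]}$ already established just above, together with the description of the induced bracket $\{\,,\,\}|$ on the symmetric quotient $\mathcal{C}(\tilde{V})|_S$ from the proposition of the preceding subsection. The statement then reduces to three checks: that the image of $c\mapsto\Omega_{\tilde{C}}$ lies in $\mathcal{C}(\tilde{V})|_S$, that this map intertwines the Chevalley and Pinczon coboundaries up to a degree-dependent nonzero scalar, and therefore induces a morphism on cohomology.

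First I would verify that $\tilde{C}$ is $\tilde{B}$-quadratic, so that $\Omega_{\tilde{C}}$ is a cyclic form on $\tilde{V}[1]$. This parallels the associative and commutative cases treated earlier: plugging the explicit formulas for $\tilde{b}$ and $\tilde{C}$ into $\tilde{B}(\tilde{C}(\tilde{x}_1,\ldots,\tilde{x}_k),\tilde{x}_{k+1})$ pairs the last entry $g_{k+1}$ against $C(x_1,\ldots,x_k)$ and, via the terms $C_j$, pairs each $h_j$ against a cyclic rearrangement of the $x_i$. Cyclic invariance under the shift $\tilde{x}_{k+1}\mapsto\tilde{x}_1$ is then exactly the required reshuffling between these two types of contribution, and uses in an essential way that the first and last components of $\tilde{C}$ vanish.

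Second, since $c$ is skew-symmetric on $V^k$, the degree-shifted map $C$ is symmetric on $V[1]^k$, and a direct inspection of the formula for $\tilde{C}$ shows that the construction preserves this symmetry, so that $\widetilde{C^{Sym}}=\tilde{C}^{Sym}$; hence $\Omega_{\tilde{C}}$ descends to an element of $\mathcal{C}(\tilde{V})|_S$. Combining now the computation $[\tilde{Q},\tilde{C}]^{Sym}=\widetilde{d_{Ch}c[1]}$ with the characterization $\{\Omega_{\tilde{Q}},\Omega_{\tilde{C}}\}|=\Omega_{d_P\tilde{C}}$ stemming from the proposition on $\{\,,\,\}|$, one gets
$\{\Omega_{\tilde{Q}},\Omega_{\tilde{C}}\}|=(k+2)\,\Omega_{\widetilde{d_{Ch}c[1]}}$.
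As the factor $(k+2)$ depends only on the Taylor degree of $c$, it can be absorbed by a degreewise rescaling of the map and in any case does not affect the induced map at the level of cohomology; one thereby obtains a morphism of complexes.

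The main obstacle will be the careful bookkeeping of the three independent normalizations at play: the Koszul sign $\eta$ attached to the shift $V\mapsto V[1]$, the combinatorial factor $(k+1)!$ arising from $\Omega^{Sym}=(k+1)\,\Omega_{Q^{Sym}}$ when passing between a cyclic form and its symmetrization, and the $k$-dependent scalar hidden in the comparison between the commutator bracket $[Q,C]^{Sym}$ on $S^+(V[1])$ and the classical Chevalley coboundary $d_{Ch}c[1]$. Each check is conceptually routine but their compatibility is exactly what makes the intertwining identity hold on the nose, so that $c\mapsto\Omega_{\tilde{C}}$ (suitably rescaled degreewise) is a morphism of complexes from the Chevalley complex for the $(V,q)$-module $M$ to the Pinczon complex $(\mathcal{C}(\tilde{V})|_S,d_P)$, yielding the announced morphism on cohomology.
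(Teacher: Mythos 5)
Your proposal is correct and follows essentially the same route as the paper: the paper's argument for this proposition is precisely the preceding computation $[\tilde{Q},\tilde{C}]^{Sym}=\widetilde{[Q,C]^{Sym}}=\widetilde{d_{Ch}c[1]}$ together with $d_P\tilde{C}=(2+k)\widetilde{d_{Ch}c[1]}$, combined with the observation that $\tilde{C}$ is totally symmetric and $\tilde{B}$-quadratic. Your explicit handling of the degree-dependent factor $(k+2)$ by a degreewise rescaling (e.g.\ by $(k+1)!$) is a point the paper leaves implicit, and is a welcome clarification rather than a deviation.
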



\section{Pinczon pre-Lie algebras}


\

\subsection{Quadratic pre-Lie algebras}

\

A left pre-Lie algebra $(V,q)$ is a (graded) vector space with a product $q$ such that:
$$
q(x,q(y,z))-q(q(x,y),z)= q(y,q(x,z))-q(q(y,x),z).
$$
Then the bracket $[x,y]=q(x,y)-q(y,x)$ is a Lie bracket. Remark that any associative algebra $(V,q)$ is a pre-Lie algebra.

A vector space $M$ is a left $(V,q)$-module for the linear map $x\otimes a\mapsto x\cdot a$, if
$$
q(x,y)\cdot a-x\cdot(y\cdot a)= q(y,x)\cdot a-y\cdot(x\cdot a)\quad (a\in M,~ x\in V).
$$
A left module is a bi-module, if there is a linear map $a\otimes x\mapsto a*x$ such that:
$$
(a*x)*y-a*q(x,y)= (x\cdot a)*y-x\cdot(a*y)\quad (a\in M,~ x,y\in V).
$$
Then a direct computation proves\\

\begin{lem}
Let $(V,q)$ a left pre-Lie algebra, then
\begin{itemize}
\item[1.] The dual $V^\star$ of $V$ is a left $(V,q)$-module, for: 
$(x\cdot f)(y)= -f(q(x,y))$.
\item[2.] Let $M$ be a $(V,q)$ bi-module then $W=V\rtimes M=(V\oplus M,q_W)$, where:
$$
q_W(x+a,y+b)= q(x,y)+x\cdot b+a*y
$$
is a left pre-Lie algebra, the semi-direct product of $V$ by $M$.
\end{itemize}
\end{lem}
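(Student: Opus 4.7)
The two parts are essentially independent. For part (1), the plan is a direct computation: fix $x,y\in V$ and $f\in V^\star$, and apply both sides of the left module identity to a test vector $z\in V$. Unwinding $(u\cdot g)(z)=-g(q(u,z))$ twice gives
$$
(q(x,y)\cdot f)(z)=-f(q(q(x,y),z)),\qquad (x\cdot(y\cdot f))(z)=f(q(y,q(x,z))),
$$
and the two analogous terms with $x,y$ swapped. Subtracting, the desired identity $q(x,y)\cdot f-x\cdot(y\cdot f)=q(y,x)\cdot f-y\cdot(x\cdot f)$ at $z$ becomes precisely $f$ applied to
$$
\bigl(q(x,q(y,z))-q(q(x,y),z)\bigr)-\bigl(q(y,q(x,z))-q(q(y,x),z)\bigr),
$$
which vanishes by the pre-Lie identity for $q$.

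For part (2), I would check the pre-Lie identity for $q_W$
$$
q_W(u,q_W(v,w))-q_W(q_W(u,v),w)=q_W(v,q_W(u,w))-q_W(q_W(v,u),w)
$$
on triples $u,v,w\in W=V\oplus M$ by multilinearity. The decisive observation is that the formula for $q_W$ has no $M\otimes M$ component, so $q_W$ vanishes on $M\times M$; hence both sides vanish as soon as at least two of the three inputs lie in $M$, and only four cases survive. When $u,v,w\in V$ the identity is the pre-Lie axiom for $q$. When $w\in V$ and exactly one of $u,v$ lies in $M$, the expansion collapses to the bi-module compatibility
$$
(a*x)*y-a*q(x,y)=(x\cdot a)*y-x\cdot(a*y).
$$
Finally, when $u,v\in V$ and $w=a\in M$, every term produces a left action and the identity reduces to the left module condition $q(x,y)\cdot a-x\cdot(y\cdot a)=q(y,x)\cdot a-y\cdot(x\cdot a)$, which is precisely the argument of (1) transported from $V^\star$ to the abstract bi-module $M$.

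The main obstacle is not conceptual but notational. At each substitution in $q_W$ one must correctly identify which side of the action appears: the left action $x\cdot a$ when the $M$-input is the second argument of $q_W$, and the right action $a*y$ when it is the first; and, in the graded setting, one must carry the Koszul signs that appear when the $M$-valued argument crosses $V$-valued ones. Once this bookkeeping is in place, the four cases above exhaust every non-vanishing situation and the lemma follows immediately from the three defining axioms already written down for a pre-Lie algebra, a left module and a bi-module.
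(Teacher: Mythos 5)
Your proof is correct and is exactly the ``direct computation'' the paper invokes without writing out: part (1) is the pre-Lie associator identity read through $f$, and part (2) splits by multilinearity into the four non-vanishing cases (pre-Lie axiom, left-module axiom, and the bi-module compatibility twice), with all other cases killed because $q_W$ has no $M\otimes M$ component. Nothing further is needed.
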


Chapoton and Livernet define in \cite{CL} the notion of pre-$L_\infty$-algebra. If $V$ is a graded vector space, we consider the space $S(V[1])\otimes V([1])$, generated by the tensors
$$
x_{[1,k]}\otimes y= x_1\cdot\ldots\cdot x_k\otimes y.
$$

Put $P^k=S^k(V[1])\otimes V([1])$, and $P=\sum_{k\geq 0}P^k$. On $P$, the coproduct $\Delta$ is defined by $\Delta(1\otimes y)=0$ and:
$$
\Delta(x_{[1,k]}\otimes y)=
\sum_{\begin{smallmatrix}1\leq j\leq k\\ I\sqcup J=[1,k]\setminus\{j\}\end{smallmatrix}} (x_I\otimes x_j)\bigotimes (x_J\otimes y).
$$

A linear map $Q:P^k\rightarrow P^0$ extends to a coderivation, still denoted $Q$ by:
$$
Q(x_{[1,n]}\otimes y)=\sum_{\begin{smallmatrix}I\sqcup J=[1,n]\\
\#J=k\end{smallmatrix}} x_I\otimes Q(x_J\otimes y)+\hskip-0.5cm
\sum_{\begin{smallmatrix}1\leq j\leq n\\ I\sqcup J=[1,n]\setminus\{j\}\\
\#J=k\end{smallmatrix}}x_I\cdot Q(x_J\otimes x_j)\otimes y.
$$

The commutator of two coderivations is a coderivation, and a map $q:V\otimes V\rightarrow V$ is a left pre-Lie product if and only if the structure equation $[Q,Q]=0$ holds for the corresponding map $Q:P^1\rightarrow P^0$. Thus:

\begin{defn}
A structure of pre-$L_\infty$ algebra on $V$ is a degree 1 coderivation $Q$ of $(P,\Delta)$, such that $[Q,Q]=0$.\\
\end{defn}

Now, a quadratic pre-Lie algebra $(V,b,q)$ is a pre-Lie algebra $(V,q)$ equipped with a symmetric, non degenerated, degree 0, bilinear form $b$ such that
$$
b(q(x,y),z)+b(y,q(x,z))= 0,\quad\text{ or }\quad B(Q(x,y),z)=B(Q(x,z),y).
$$
\begin{ex}\label{VV*}
Let $(V,q)$ be a left pre-Lie algebra, consider the pre-Lie algebra, semidirect product $W=V\rtimes V^\star$, with
$$
q_W(x+f,y+g)= q(x,y) + x\cdot g= q(x,y)-g(q(x,\cdot)).
$$
It is a quadratic pre-Lie algebra if we endow $W$ by the canonical symmetric, non degenerated form $b(x+f,y+g)=f(y)+g(x)$.\\
\end{ex}

Generalizing, let us say that a coderivation $Q=Q_0+Q_1+\ldots$ of $\Delta$ is $B$-quadratic if, for any $k$,
$$
B(Q_k(x_{[1,k]}\otimes y_1),y_2)= B(Q_k(x_{[1,k]}\otimes y_2),y_1).
$$
Then a direct computation gives:

\begin{lem}
Let $Q$ and $Q'$ be two $B$-quadratic coderivations of $\Delta$. Then $[Q,Q']$ is $B$-quadratic.\\
\end{lem}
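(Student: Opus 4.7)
My plan is to expand $Q \circ Q'(x_{[1,n]} \otimes y_1)$ via the coderivation formula (taking $Q=Q_k$, $Q'=Q'_{k'}$ homogeneous and $n=k+k'$, so that the result lands in $V[1]=P^0$) and to analyze the pairing $B((Q \circ Q')(x_{[1,n]} \otimes y_1), y_2)$ term by term. I will rely on the $B$-quadratic property to rotate the final slot of each individual $Q_k$ or $Q'_{k'}$, and on symmetry of $B$ on $V[1]$ to reposition the pairing, aiming to show symmetry under $y_1 \leftrightarrow y_2$ of $B([Q,Q'](x_{[1,n]} \otimes y_1), y_2)$. This parallels the strategy used in Proposition 3.1 for the associative case.

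Applying the coderivation formula produces two families of nested composite terms: (I) the on-final-slot terms $Q_k(x_I \otimes Q'_{k'}(x_J \otimes y_1))$ with $I \sqcup J = [1,n]$, $|I|=k$, $|J|=k'$, in which $Q'_{k'}$ fills the final slot of $Q_k$; and (II) the on-symmetric-slot terms $Q_k(x_I \cdot Q'_{k'}(x_J \otimes x_j) \otimes y_1)$ with $I \sqcup J \sqcup \{j\} = [1,n]$, $|I|=k-1$, $|J|=k'$, in which $Q'_{k'}$ occupies one of the symmetric slots of $Q_k$ (with $x_j$ playing the role of its own final input). For type (II), the outermost operator $Q_k$ still carries $y_1$ in its final slot, so a single application of the $B$-quadratic property of $Q_k$ exchanges $y_1$ and $y_2$; hence the type (II) contribution to $B(Q\circ Q'(x\otimes y_1),y_2)$ is already symmetric in $(y_1,y_2)$, and the analogous statement holds for $Q'\circ Q$. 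For type (I), I will perform a double rotation: first apply $B$-quadraticity of $Q_k$ to produce $B(Q_k(x_I \otimes y_2), Q'_{k'}(x_J \otimes y_1))$, then combine symmetry of $B$ with $B$-quadraticity of $Q'_{k'}$ to rewrite this as a signed multiple of $B(Q'_{k'}(x_J \otimes Q_k(x_I \otimes y_2)), y_1)$, which is precisely a type (I) term of $Q' \circ Q$ evaluated at $x_{[1,n]} \otimes y_2$ and paired with $y_1$. Thus the type (I) contribution from $Q\circ Q'$ at $y_1$ paired with $y_2$ matches, up to a controlled sign, the type (I) contribution from $Q'\circ Q$ at $y_2$ paired with $y_1$, and these pair up in the bracket $[Q,Q']=Q\circ Q'-Q'\circ Q$ to give the required symmetry.

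The main obstacle I anticipate is sign bookkeeping. Each rotation via $B$-quadraticity and each symmetry swap of $B$ carries a Koszul sign depending on the shifted degrees of $Q$, $Q'$, the $x_i$ and the $y_j$. I expect these signs to compose correctly because $B$ has a prescribed degree-shifted symmetry on $V[1]$ inherited from the symmetry of $b$ on $V$, and the total shift in each double rotation for type (I) is governed by a sum of degrees that is invariant under the final $y_1\leftrightarrow y_2$ swap. Verifying that every sign lines up is the routine but delicate part of the argument; beyond this, the proof is a combinatorial matching of terms exactly as above, analogous to the calculation in Proposition 3.1.
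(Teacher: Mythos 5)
Your proposal is correct. Note that the paper offers no proof of this lemma at all --- it is introduced only by the phrase ``a direct computation gives'' --- so your argument is a legitimate filling-in of that computation. Your decomposition of $Q\circ Q'$ projected to $P^0$ into the two families $Q_k(x_I\otimes Q'_{k'}(x_J\otimes y))$ and $Q_k\bigl(x_I\cdot Q'_{k'}(x_J\otimes x_j)\otimes y\bigr)$ is exactly what the coderivation formula yields; the second family is symmetric in $(y_1,y_2)$ term by term via $B$-quadraticity of the outer $Q_k$, and your double-rotation trick for the first family (pivot on $Q_k$, use symmetry of $B$, pivot on $Q'_{k'}$) is precisely the manipulation the paper itself uses in its proof that $\mathcal D_B$ is a Lie subalgebra in the associative setting, so the cross-matching of the type (I) terms of $Q\circ Q'$ at $y_1$ with those of $Q'\circ Q$ at $y_2$ is the right mechanism. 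The unverified Koszul signs are the only loose end, but you flag them explicitly and the paper's own computations are carried out at the same level of sign precision.
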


A structure of quadratic pre-$L_\infty$ algebra on $(V,b)$ is thus a $B$-quadratic coderivation $Q$ of $\Delta$ such that $[Q,Q]=0$.\\


\subsection{Pinczon bracket for pre-Lie algebras}


\

To a $k+1$-linear coderivation $Q$, let us associate the form $\Omega_Q$:
$$
\Omega_{Q}(x_{[1,k]}\otimes y_1\cdot y_2)=B(Q(x_{[1,k]}\otimes y_1),y_2)+B(Q(x_{[1,k]}\otimes y_2),y_1).
$$

This form is separately symmetric in its $k$ first variables, and its 2 last variables. Define thus $\mathcal P_k$ the space of such forms, and $\mathcal P(V)=\sum_{k\geq 0}\mathcal P_k$. An element of $\mathcal P(V)$ is called a bi-symmetric form. Now it is possible to extend to $\mathcal P(V)$ the Pinczon bracket $\{~,~\}$ associated to $B$. Indeed, a direct computation shows:

\begin{lem}
Let $\mathfrak g$ be a Lie algebra and $A$ a commutative algebra which is a right $\mathfrak g$-module such that, for any $x\in \mathfrak g$, $f\mapsto f\cdot x$ is a derivation of $A$. Then the formula:
$$
[f\otimes x,g\otimes y]= fg\otimes [x,y]+(f\cdot y)g\otimes x-f(g\cdot x)\otimes y
$$
defines a Lie bracket on $A\otimes \mathfrak g$.\\ 
\end{lem}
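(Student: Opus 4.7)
The plan is to verify antisymmetry directly and then to check the Jacobi identity by expanding the double bracket, summing cyclically, and grouping the resulting monomials according to their $\mathfrak g$-tensor factor.

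Antisymmetry is immediate: interchanging $(f,x)$ with $(g,y)$ turns $fg\otimes[x,y]$ into $-fg\otimes[x,y]$ by commutativity of $A$ and antisymmetry of the bracket in $\mathfrak g$, while the two correction terms $(f\cdot y)g\otimes x$ and $-f(g\cdot x)\otimes y$ get swapped (with a sign) using commutativity of $A$.

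For the Jacobi identity I would apply the bracket formula twice to $[[f\otimes x,g\otimes y],h\otimes z]$, obtaining nine monomials, and then sum over the three cyclic permutations of $(f,x),(g,y),(h,z)$. The resulting $27$ monomials split into three families according to their tensor factor in $\mathfrak g$. First, the three terms of the form $fgh\otimes[[x,y],z]$ (and its cyclic analogues) cancel using commutativity of $A$ and the Jacobi identity in $\mathfrak g$. Second, the nine terms whose $\mathfrak g$-factor is one of the single brackets $[x,y]$, $[y,z]$, $[z,x]$ collapse in groups of three: for each such bracket the contributions assemble into an expression of shape $\bigl[(fg)\cdot z-(f\cdot z)g-f(g\cdot z)\bigr]h\otimes[x,y]$, which vanishes by the derivation rule.

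The main obstacle is the third family, consisting of the fifteen terms whose $\mathfrak g$-factor is a single $x$, $y$, or $z$. For the five terms tensored with $x$ one first expands the nested actions $((f\cdot y)g)\cdot z$ and $(h(f\cdot z))\cdot y$ via the derivation property, then cancels two symmetric pairs of monomials by commutativity of $A$; what remains is
$$
gh\bigl[(f\cdot y)\cdot z-(f\cdot z)\cdot y-f\cdot[y,z]\bigr]\otimes x,
$$
which is zero by the right $\mathfrak g$-module axiom. The $\otimes y$ and $\otimes z$ cases are identical up to relabeling, completing the verification.
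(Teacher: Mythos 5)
Your verification is correct: the antisymmetry check, the 3+9+15 grouping of the 27 cyclic terms, the cancellation of the single-bracket family via the derivation identity $(fg)\cdot z=(f\cdot z)g+f(g\cdot z)$, and the reduction of the single-generator family to $gh\bigl[(f\cdot y)\cdot z-(f\cdot z)\cdot y-f\cdot[y,z]\bigr]\otimes x$ all check out. The paper offers no proof beyond the phrase ``a direct computation shows,'' and your argument is precisely that direct computation, carried out with the right bookkeeping.
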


Denote $\mathcal S(V)=(\mathcal C(V))^{Sym}$ the symmetric algebra of $V$, it is a commutative algebra for the symmetric product $\cdot$. On the other hand, by construction, $\mathcal C^2$ is a Lie algebra for the Pinczon bracket $\{~,~\}$, acting on $\mathcal S$ through $(\Omega,\alpha)\mapsto \{\Omega,\alpha\}$. Then the properties of the Pinczon bracket assure that $\mathcal S$ is a $(\mathcal S^2,\{~,~\})$-module and the action is a derivation of $\mathcal S$. Finally remark that $\mathcal P(V)=\mathcal S(V)\otimes \mathcal C^2$, therefore:

\begin{cor}
Let $(V,b)$ be a vector space with a symmetric, non degenerated bilinear form, then the space $\mathcal P(V)$ of bi-symmetric forms on $V$ is a Lie algebra for the Pinczon bracket:
$$
\{\Omega\otimes\alpha,\Omega'\otimes\alpha'\}=\Omega\cdot\{\alpha,\Omega'\}\otimes \alpha'+\Omega'\cdot\{\Omega,\alpha'\}\otimes\alpha+\Omega\cdot\Omega'\otimes\{\alpha,\alpha'\}.
$$
\end{cor}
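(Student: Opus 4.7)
The plan is to realize the corollary as a direct application of the preceding lemma, with $A=\mathcal S(V)$ (commutative under the symmetric product $\cdot$) and $\mathfrak g=\mathcal C^2$ (Lie algebra under the Pinczon bracket).

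I would first justify the factorization $\mathcal P(V)=\mathcal S(V)\otimes\mathcal C^2$: an element of $\mathcal P_k$ is a $(k+2)$-linear form on $V[1]$, symmetric in its first $k$ and separately in its last $2$ arguments, and such a form is a sum of pure tensors $\Omega\otimes\alpha$ with $\Omega\in\mathcal S^k$ and $\alpha\in\mathcal C^2$ (using that $2$-linear cyclic is exactly symmetric bilinear). Next, the grading rule $\{\mathcal C^{k+1},\mathcal C^{k'+1}\}\subset \mathcal C^{k+k'}$ applied with $k=k'=1$ gives $\{\mathcal C^2,\mathcal C^2\}\subset\mathcal C^2$, so that $(\mathcal C^2,\{~,~\})$ is a genuine Lie subalgebra of $(\mathcal C(V),\{~,~\})$. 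The same rule with $k=m-1$, $k'=1$ gives $\{\mathcal S^m,\mathcal C^2\}\subset\mathcal S^m$, and the graded Jacobi identity then says that $(\Omega,\alpha)\mapsto\{\Omega,\alpha\}$ is a right action of $\mathcal C^2$ on $\mathcal S(V)$.

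The main obstacle is the remaining hypothesis of the preceding lemma: for every $\alpha\in\mathcal C^2$, the operator $\{\cdot,\alpha\}$ must be a derivation of the symmetric product on $\mathcal S(V)$. The definition of a Pinczon bracket only asserts this for linear $\alpha$, so it has to be lifted by one degree. To do so I would invoke the explicit formula $\{\Omega,\alpha\}=\sum_i \iota_{e_i}\Omega\odot \iota_{e'_i}\alpha$ from Proposition~2.4, noting that each $\iota_{e'_i}\alpha$ is then a linear form. Combined with the fact that interior products are graded derivations of $\odot$ (and therefore of the induced symmetric product $\cdot$ on the symmetric quotient) and with the Leibniz rule already established for linear forms, a routine sign-tracking computation yields $\{\Omega\cdot\Omega',\alpha\}=\{\Omega,\alpha\}\cdot\Omega'\pm\Omega\cdot\{\Omega',\alpha\}$.

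Once the four hypotheses are in hand, the preceding lemma applies and produces a Lie bracket on $\mathcal S(V)\otimes\mathcal C^2=\mathcal P(V)$. Unwinding the lemma's formula with $f\otimes x=\Omega\otimes\alpha$, $g\otimes y=\Omega'\otimes\alpha'$, right action $f\cdot y:=\{\Omega,\alpha'\}$, and using the graded antisymmetry of $\{~,~\}$ to absorb the sign of the minus term, reproduces exactly the displayed bracket of the corollary.
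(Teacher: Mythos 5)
Your proposal follows exactly the paper's route: the corollary is obtained by applying the preceding lemma with $A=\mathcal S(V)$ and $\mathfrak g=\mathcal C^2$, the action being given by the Pinczon bracket and the identification $\mathcal P(V)=\mathcal S(V)\otimes\mathcal C^2$. You in fact supply more detail than the paper, which merely asserts the module and derivation properties; your verification that $\{\cdot\,,\alpha\}$ is a derivation of the symmetric product via the explicit contraction formula is the right way to fill that gap.
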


This bracket is related to the commutator of $B$-preserving coderivations, since:

\begin{prop}
Suppose $b$ is a symmetric non degenrated form on $V$, and $Q$, $Q'$ two $B$-quadratic coderivations of $\Delta$. Consider the forms $\Omega_Q$, $\Omega_{Q'}$ in $\mathcal P(V)$, then
$$
\{\Omega_Q,\Omega_{Q'}\}=2\Omega_{[Q,Q']}.
$$ 
\end{prop}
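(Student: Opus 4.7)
The plan is to reduce by bilinearity to the case $\Omega_Q=\Omega\otimes\alpha$ and $\Omega_{Q'}=\Omega'\otimes\alpha'$ with $\Omega\in\mathcal S^k$, $\Omega'\in\mathcal S^{k'}$, and $\alpha,\alpha'\in\mathcal C^2$, using the decomposition $\mathcal P(V)=\mathcal S(V)\otimes\mathcal C^2$ from the preceding Corollary, and then to evaluate both sides on a test tensor $x_{[1,k+k']}\otimes y_1\cdot y_2$. Since $[Q,Q']$ is again $B$-quadratic by the preceding Lemma, the form $\Omega_{[Q,Q']}$ is a well-defined element of $\mathcal P(V)$.

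\textbf{Structure of the commutator.} The coderivation formula for $Q$ on $(P,\Delta)$ has two distinct summands: a \emph{root-slot} action that processes the $y$-variable through $Q$, and a \emph{symmetric-slot} action that singles out some $x_j$, feeds it into $Q$'s $y$-slot, and lets the output join the symmetric part. Consequently, $Q\circ Q'$ splits into four kinds of terms according to the root-versus-symmetric slot choices for $Q$ and $Q'$, and likewise for $Q'\circ Q$. After the subtraction, each of these contributions, paired against $y_2$ through $B$ and (via $B$-quadraticity) also against $y_1$, gives a summand of $\Omega_{[Q,Q']}(x_{[1,k+k']}\otimes y_1\cdot y_2)$ that involves evaluations of $\Omega_Q$ and $\Omega_{Q'}$ on sub-tensors indexed by partitions $I\sqcup J=[1,k+k']$ and a distinguished index $j$.

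\textbf{Matching the three Pinczon summands.} Expanding the left-hand side by the formula of the Corollary produces three terms. The pure term $\Omega\cdot\Omega'\otimes\{\alpha,\alpha'\}$, once unfolded by the Pinczon--Ushirobira formula for $\{\alpha,\alpha'\}$ on $\mathcal C^2$, reproduces precisely those contributions of $[Q,Q']$ in which both $Q$ and $Q'$ act through their root slots, the symmetric product $\Omega\cdot\Omega'$ supplying the sum over partitions $I\sqcup J$. The mixed terms $\Omega\cdot\{\alpha,\Omega'\}\otimes\alpha'$ and $\Omega'\cdot\{\Omega,\alpha'\}\otimes\alpha$ account for the two cases where one coderivation's root-slot output is inserted into a symmetric slot of the other's input; unfolding $\{\alpha,\Omega'\}$ via Pinczon--Ushirobira and identifying the singled-out index $j$ with the contracted basis vector $e_i$ reproduces the symmetric-slot $\circ$ root-slot parts of $[Q,Q']$.

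\textbf{Main obstacle.} The principal difficulty is the bookkeeping of combinatorial multiplicities and Koszul signs through the symmetric product $\cdot$ on $\mathcal S(V)$ and the cyclic product $\odot$ used in the Pinczon--Ushirobira formula, so that each partition sum on the two sides matches with the same coefficient. The overall factor $2$ on the right-hand side reflects the symmetrization in $(y_1,y_2)$ built into the definition of $\Omega_Q$: for a $B$-quadratic coderivation the two defining terms coincide, so both $\Omega_Q$ and $\Omega_{[Q,Q']}$ carry this doubling, whereas the Pinczon bracket formula on $\mathcal P(V)$ does not introduce it a second time. A convenient way to organize the verification is to check the identity first on homogeneous even-degree vectors, where all Koszul signs are trivial, and then extend by the Koszul rule.
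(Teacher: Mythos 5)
Your outline follows the paper's own proof: decompose $\Omega_Q=\beta\otimes\alpha$ in $\mathcal S(V)\otimes\mathcal C^2$, evaluate the three summands of the extended Pinczon bracket on $x_{[1,k+k']}\otimes y_1\cdot y_2$, and match them with the root-slot/symmetric-slot contributions to $[Q,Q']$ exactly as you describe; your accounting of the factor $2$ (a factor $4$ from the two $B$-quadratic doublings on the left against a single factor $2$ inside $\Omega_{[Q,Q']}$) is also the one the paper uses. The only slip is that $Q\circ Q'$ contributes two (not four) kinds of terms, giving four in all for the commutator, but your subsequent term-by-term matching is nonetheless the correct one.
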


\begin{proof}
Suppose $\Omega_Q=\beta\otimes\alpha\in\mathcal P^k$, and $\Omega_{Q'}=\beta'\otimes\alpha'\in\mathcal P^{k'}$, then
$$\aligned
\beta\cdot \beta'&\otimes\{\alpha,\alpha'\}(x_{[1,k+k']},y_{[1,2]}) =\\
&=\hskip-0.5cm \sum_{\begin{smallmatrix}i,~I\sqcup J=[1,k+k']\\ \#I=k\end{smallmatrix}}\big(\Omega_Q(x_I\otimes e_i\cdot y_1)\Omega_{Q'}(x_J\otimes e'_i\cdot y_2)
+\Omega_Q(x_I\otimes e_i\cdot y_2)\Omega_{Q'}(x_J\otimes e'_i\cdot y_1)\big)\\
&= 4\hskip -0.5cm \sum_{\begin{smallmatrix}I\sqcup J=[1,k+k']\\ \#I=k\end{smallmatrix}} -B(Q'(x_J\otimes Q(x_I\otimes y_1)), y_2)+B(Q(x_I\otimes Q'(x_J\otimes y_1)),y_2)
\endaligned
$$
On the other hand,
$$\aligned
\beta\cdot\{\alpha,\beta'\}&\otimes\alpha'(x_{[1,k+k']},y_{[1,2]}) =\\
& =\hskip-0.5cm \sum_{\begin{smallmatrix}j,I\sqcup J=[1,k+k']\setminus\{j\}\\ \#I=k\end{smallmatrix}}\sum_i\Omega_Q(x_I\otimes e_i\cdot x_j)\Omega_{Q'}(e'_i\cdot x_J\otimes y_1\cdot y_2)
\\
&= -4\hskip -0.5cm\sum_{\begin{smallmatrix}j,I\sqcup J=[1,k+k']\setminus\{j\}\\ \#I=k\end{smallmatrix}} B(Q'(Q(x_I\otimes x_j)\cdot x_J\otimes y_1),y_2).
\endaligned
$$
And similarly for the last term $\{\beta,\alpha'\}\cdot\beta'\otimes \alpha$ in the Pinczon bracket. Suming up, we get:
$$\aligned
\{\Omega_Q,\Omega_{Q'}\}(x_{[1,k+k']}\otimes y_{[1,2]})
&\eqKu 2B([Q,Q'](x_{[1,k+k']}\otimes y_1),y_2)+2B([Q,Q'](x_{[1,k+k']}\otimes y_2),y_1)\\
&\eqKu 2\Omega_{[Q,Q']}(x_{[1,k+k']}\otimes y_{[1,2]}).
\endaligned
$$
\end{proof}

As for the other sort of algebras,
\begin{defn}
A Pinczon pre-Lie algebra $(\mathcal P(V),\{~,~\},\Omega)$ is a Pinczon bracket $\{~,~\}$ on $\mathcal C(V)$, extended to $\mathcal P(V)$, and a degree 3 bi-symmetric form $\Omega\in \mathcal P(V)$, such that $\{\Omega,\Omega\}=0$.\\
\end{defn}

Since a structure of pre-$L_\infty$ algebra on $V$ is a degree 1 coderivation $Q$ of $(P,\Delta)$, such that the commutator $[Q,Q]$ vanishes. With the preceding computations, a Pinczon pre-Lie algebra is in fact a quadratic pre-$L_\infty$ algebra.\\

\begin{prop}
Let $(\mathcal P(V),\{~,~\},\Omega)$ be a Pinczon pre-Lie algebra. Write $\Omega=\Omega_Q$, and $Q=\sum_k Q_k$, with $Q_k:P^k\rightarrow P^0$. Then $Q$ is a pre-$L_\infty$ algebra structure on $V$, and each $Q_k$ is $B$-quadratic. 

Conversely, if $(V,b)$ is a vector space with a non degenerated symmetric bilinear form, any $B$-quadratic structure $Q$ of pre-$L_\infty$ algebra on $V$ defines an unique structure of Pinczon pre-Lie algebra on $V$.\\
\end{prop}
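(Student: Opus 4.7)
The strategy is to use the bracket identity $\{\Omega_Q,\Omega_{Q'}\}=2\Omega_{[Q,Q']}$ established in the preceding proposition, together with a bijective correspondence between degree 3 bi-symmetric forms on $V[1]$ and $B$-quadratic coderivations of $(P,\Delta)$ decomposed into their Taylor components. Once this correspondence is in hand, the structure equation $\{\Omega,\Omega\}=0$ is immediately translated into the pre-$L_\infty$ structure equation $[Q,Q]=0$.

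First I would set up the bijection explicitly. Given $\Omega\in\mathcal P(V)$ of degree 3, decompose $\Omega=\sum_k\Omega^{(k)}$ with each $\Omega^{(k)}$ a $(k+2)$-linear bi-symmetric form (symmetric in its first $k$ arguments, symmetric in its last two). Using non-degeneracy of $B$, define $Q_k\colon S^k(V[1])\otimes V[1]\to V[1]$ by the relation
\[
B(Q_k(x_{[1,k]}\otimes y_1),y_2)=\frac{1}{2}\,\Omega^{(k)}(x_{[1,k]}\otimes y_1\cdot y_2).
\]
The symmetry of $\Omega^{(k)}$ in $y_1,y_2$ forces $Q_k$ to be $B$-quadratic, while the symmetry in the first $k$ arguments ensures $Q_k$ descends to $S^k(V[1])\otimes V[1]$. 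Extending each $Q_k$ to a coderivation of $\Delta$ and summing produces a $B$-quadratic coderivation $Q=\sum_k Q_k$ with $\Omega=\Omega_Q$. For the inverse direction, observe that if $Q$ is $B$-quadratic, the defining formula for $\Omega_Q$ reduces to $\Omega_Q(x_{[1,k]}\otimes y_1\cdot y_2)=2B(Q(x_{[1,k]}\otimes y_1),y_2)$, so non-degeneracy of $B$ makes the correspondence $Q\leftrightarrow\Omega_Q$ injective on $B$-quadratic coderivations.

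Next I would apply the preceding proposition with $Q'=Q$: the structure equation $\{\Omega,\Omega\}=0$ translates into $2\Omega_{[Q,Q]}=0$. The lemma asserting that the commutator of two $B$-quadratic coderivations remains $B$-quadratic implies $[Q,Q]$ lies in the domain of injectivity just established, whence $\Omega_{[Q,Q]}=0$ is equivalent to $[Q,Q]=0$. This last equation is by definition the pre-$L_\infty$ structure equation, so $Q$ is a $B$-quadratic pre-$L_\infty$ structure on $V$. The converse is simply the inverse of the bijection run forward: starting from a $B$-quadratic pre-$L_\infty$ coderivation $Q$, the form $\Omega_Q$ is bi-symmetric of degree 3 and satisfies $\{\Omega_Q,\Omega_Q\}=2\Omega_{[Q,Q]}=0$, so it defines a Pinczon pre-Lie structure.

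The substantial work is already carried in the preceding proposition (the bracket/commutator identity) and in the $B$-quadraticity lemma, so the remaining steps are essentially bookkeeping rather than technical obstacles. The only mildly delicate point is matching degree conventions: a pre-$L_\infty$ coderivation has each $Q_k$ of degree $1$ on $V[1]$, and the identity $\deg(\Omega)=|\omega|+(\text{number of arguments})$ recorded in Section 2 ensures that the associated forms $\Omega_{Q_k}$ all share the common degree 3, so that $\Omega=\sum_k\Omega_{Q_k}$ is indeed a well-defined homogeneous element of $\mathcal P(V)$ of degree 3.
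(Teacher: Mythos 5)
Your proof is correct and follows exactly the route the paper intends: the paper states this proposition without a separate proof, appealing only to ``the preceding computations'' --- namely the identity $\{\Omega_Q,\Omega_{Q'}\}=2\Omega_{[Q,Q']}$ and the lemma that the commutator of $B$-quadratic coderivations is $B$-quadratic --- which is precisely the bijection-plus-bracket-identity argument you spell out. Your explicit verification that $Q\mapsto\Omega_Q$ is injective only on the $B$-quadratic coderivations (so that $\Omega_{[Q,Q]}=0$ really forces $[Q,Q]=0$) is the one point the paper leaves tacit, and you handle it correctly.
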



\subsection{Pinczon and pre-Lie algebra cohomologies}


\

Suppose $(V,q)$ is a left pre-Lie algebra and let $Q$ be the coderivation of $\Delta$ associated to $q$. Since $[Q,Q]=0$, $d:C\mapsto [Q,C]$ is a coboundary operator. 

Now, let $M$ be a $(V,q)$ bi-module. Let $W=V\rtimes M$ be the semi-direct product of $V$ by $M$. Any map $c:\wedge^k V\otimes V\rightarrow M$ can be naturally extended to a map, still denoted $c$, from $\wedge^k W\otimes W$ to $W$. If $C$ is the corresponding coderivation, the map $dc$, where the coderivation corresponding to $dc:\wedge^{k+1} W\otimes W\rightarrow W$ is $[Q,C]$, is the extension  of a map $d_{pL}c:\wedge^{k+1} V\otimes V\rightarrow M$. The operator $d$ is the pre-Lie coboundary operator. In an unpublished work, Ridha Chatbouri computed explicitely this operator:\\

\begin{prop}
The cohomology of the pre-Lie algebra $(V,q)$ is defined as follows. Let $M$ be a $(V,q)$ bi-module, the cohomology with value in $M$ is given by the following  operator: If $c:\wedge^k V\otimes V\rightarrow M$ is a $(k+1)$-cochain, with degree $|c|$, then $dc$ is explicitly:
$$\aligned
&(-1)^{|c|}d_{pL}c(x_0\wedge\ldots\wedge x_k\otimes y)=\\
&\sum_{i=0}^k(-1)^ic(x_0\wedge\ldots\hat{i}\ldots\wedge x_k\otimes x_i)*y
 -\sum_{i=0}^k(-1)^{i}c(x_0\wedge\ldots\hat{i}\ldots\wedge x_k\otimes q(x_i,y))\\
&+\sum_{i<j}(-1)^{i+j}c([x_i,x_j]\wedge x_0\wedge\ldots\hat{i}\hat{j}\ldots\wedge x_k\otimes y)
+\sum_{i=0}^k(-1)^ix_i\cdot c(x_0\wedge\ldots\hat{i}\ldots\wedge x_k\otimes y).
\endaligned
$$
\end{prop}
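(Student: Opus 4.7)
The plan is to compute $d_{pL}c = [Q, \tilde{C}]$ directly from the coderivation extension formula for $(P,\Delta)$ recalled in the previous subsection, and to match each of the four summands in the statement with a distinct family of terms that appear. Here $Q$ is the degree $1$ coderivation associated to the semidirect pre-Lie product $q_W$ on $W = V \rtimes M$, and $\tilde{C}$ the extension of $c$ to $W$.

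First, I would apply the extension formula to obtain $(Q \circ \tilde{C})(x_{[0,k]} \otimes y)$. Since $\tilde{C}$ has Taylor order $k$, its action on a size-$(k{+}1)$ $S$-part with distinguished input $y$ produces two families in $P_W^1$: terms of shape $x_i \otimes \tilde{C}(\hat{x}_i \otimes y)$ (from the first sum) and terms of shape $\tilde{C}(\hat{x}_j \otimes x_j) \otimes y$ (from the second sum). Applying $Q$ of Taylor order $1$ then gives, by the semidirect-product rules $q_W(x,m) = x \cdot m$ and $q_W(m,y) = m * y$, the fourth summand $\sum (-1)^i x_i \cdot c(\hat{x}_i \otimes y)$ and the first summand $\sum (-1)^i c(\hat{x}_i \otimes x_i) * y$ of the formula, respectively. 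Symmetrically, for $(\tilde{C} \circ Q)(x_{[0,k]} \otimes y)$, applying $Q$ first produces terms $\hat{x}_i \otimes q(x_i, y)$ and $\hat{x}_{i,j} \cdot q(x_i, x_j) \otimes y$ in $P_W^k$; composing with $\tilde{C}$ yields $\sum c(\hat{x}_i \otimes q(x_i,y))$ and $\sum_{i \neq j} c(\hat{x}_{i,j} \cdot q(x_i,x_j) \otimes y)$.

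The critical step is the treatment of the last family. Summing over ordered $(i,j)$ and pairing $(i,j)$ with $(j,i)$, the graded commutativity of the symmetric product on $V[1]$ (which for elements of $V[1]$-degree $-1$ becomes anti-commutativity, matching the wedge in $V$) introduces the sign flip that turns $q(x_i,x_j) + q(x_j,x_i)$ into $q(x_i,x_j) - q(x_j,x_i) = [x_i,x_j]$, producing the third summand with its $(-1)^{i+j}$ coefficient. Finally, the graded commutator $[Q,\tilde{C}] = Q\tilde{C} - (-1)^{|c|}\tilde{C}Q$ contributes the global sign $(-1)^{|c|}$ on the left-hand side and flips the sign on the $(\tilde{C}\circ Q)$ contributions, which accounts for the unique minus sign in front of the second summand. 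One should also verify that the $M$-valued component of $[Q,\tilde{C}]$ retains only these four families: any contribution involving $\tilde{C}$ applied to arguments already in $M$ vanishes by construction, as does $q_W$ restricted to $M \otimes M$.

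The main obstacle is the consistent tracking of Koszul signs throughout: the shift $V \to V[1]$ sends all our vectors to $V[1]$-degree $-1$, so every transposition and every extraction of some $x_j$ from the symmetric product incurs a sign that must align with the factors $(-1)^i$ or $(-1)^{i+j}$ in the statement. Once this bookkeeping is in place (by systematic application of the $\eta$-factor of Section 2.1 to $q$, $c$, and each transposition in the extension formula), the identification of $d_{pL}c$ with the four-term formula is a direct reading-off.
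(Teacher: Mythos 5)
Your proposal is correct and follows exactly the route the paper intends: $d_{pL}c$ is by definition the commutator $[Q,\tilde C]$ of coderivations of $(P,\Delta)$ built over the semidirect product $W=V\rtimes M$, and expanding the two compositions via the extension formula yields precisely the four families of terms you identify (the paper itself does not write out this computation, deferring to Dzhumadil'daev and an unpublished calculation of Chatbouri, so your sketch is if anything more explicit than the source). The only point to watch in the final bookkeeping is that the graded commutator sign is governed by $\deg\tilde C=|c|+k$ rather than $|c|$ alone, with the discrepancy absorbed by the $\eta$-factors you already plan to track.
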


In \cite{Dz}, A. Dzhumaldil'daev defined a coboundary operator $d$ for right pre-Lie algebra, which is the same as the operator computed in the preceding proposition, modulo the change of side for pre-Lie axioms. Then he used this operator to compute a corresponding homology. The proof of the proposition is the inverse of the Dzhumaldil'daev proof.

\begin{rema}
A left $(V,q)$-module is nothing else that a $(V,[~,~])$-module. However the symmetry of a pre-Lie cochain differs of the symmetry of a Lie cochain. Thus the cocycles are not the same. For instance we consider $V=C^\infty_c(\R)$, with $q(f,g)=fg'$. Choose $M=V$ and $f\cdot g=q(f,g)$. Then $M$ is a left module. Put $c(f,g)=fg$. It is easy to verify it is a cocycle, but it is not skewsymmetric. In fact it is the coboundary of $f\mapsto b(f)$, with $(b(f))(t)=tf(t)$.\\
\end{rema}

More generally, if $Q$ is a coderivation of $\Delta$, which is a pre-$L_\infty$ structure, then the operator $C\mapsto[Q,C]$ is a coboundary operator. Let us call the corresponding cohomology the $(V,Q)$ cohomology. Then\\

\begin{cor}
Suppose $(V,b,q)$ is a quadratic pre-Lie algebra or, more generally, $(\mathcal P(V),\{~,~\},\Omega)$ a Pinczon pre-Lie algebra. Then the operator $d:\mathcal P(V)\rightarrow \mathcal P(V)$ defined by $d\Omega=\{\Omega_Q,\Omega\}$ is a coboundary operator and the corresponding cohomology coincides with the $(V,Q)$-cohomology.\\
\end{cor}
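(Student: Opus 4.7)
The plan is to verify two claims in sequence: first, that $d^2=0$, so that $d$ is genuinely a coboundary operator on $\mathcal P(V)$; second, that the $(V,Q)$-cohomology — that is, the cohomology of $C\mapsto[Q,C]$ on $B$-quadratic coderivations of $(P,\Delta)$ — is carried isomorphically onto the cohomology of $d$ by the correspondence $C\mapsto\Omega_C$ already set up in the previous subsection.

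For the first claim, I would compute $d^2\Omega=\{\Omega_Q,\{\Omega_Q,\Omega\}\}$ and invoke the graded Jacobi identity for the Pinczon bracket on $\mathcal P(V)$, which is available from the preceding corollary making $(\mathcal P(V),\{~,~\})$ a graded Lie algebra. Since $\Omega_Q$ has odd total degree $3$, graded Jacobi applied to $\{\Omega_Q,\{\Omega_Q,\Omega\}\}$ yields
\[
2\,\{\Omega_Q,\{\Omega_Q,\Omega\}\}=\{\{\Omega_Q,\Omega_Q\},\Omega\},
\]
and the defining hypothesis $\{\Omega_Q,\Omega_Q\}=0$ of a Pinczon pre-Lie algebra gives $d^2=0$ at once.

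For the second claim, I would use the bijection $Q\leftrightarrow\Omega_Q$ between $B$-quadratic coderivations of $(P,\Delta)$ and bi-symmetric forms in $\mathcal P(V)$, together with the identity $\{\Omega_Q,\Omega_{Q'}\}=2\,\Omega_{[Q,Q']}$ established just above. Setting $Q'=C$ gives $d\Omega_C=2\,\Omega_{[Q,C]}$, so under the isomorphism $C\mapsto\Omega_C$ the operator $C\mapsto[Q,C]$ corresponds to $\frac{1}{2}d$. Since rescaling a differential by a nonzero scalar leaves the kernel and image unchanged, the two complexes are canonically isomorphic, and their cohomologies coincide.

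The only real subtlety is the first step: one must make sure that $\Omega_Q$ has odd degree in order for graded Jacobi to produce the factor $2$ on the left-hand side rather than a cancellation. This is guaranteed because the statement fixes $\deg\Omega=3$, and more generally the same self-bracket identity holds for any odd-degree element of a graded Lie algebra. Beyond this point, the argument is simply a matter of stringing together results that are already in place, with no additional combinatorial computation required.
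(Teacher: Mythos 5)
Your argument is correct and is exactly the one the paper intends: the corollary is stated without separate proof precisely because it follows from the preceding proposition $\{\Omega_Q,\Omega_{Q'}\}=2\Omega_{[Q,Q']}$ together with the graded Jacobi identity applied to the odd element $\Omega_Q$ (giving $2\{\Omega_Q,\{\Omega_Q,\Omega\}\}=\{\{\Omega_Q,\Omega_Q\},\Omega\}=0$), and the bijection $C\mapsto\Omega_C$ identifying $[Q,\cdot]$ with $\tfrac12 d$. Nothing is missing.
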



\section{A natural example}


\

Recall that the infinitesimal deformations of an associative (resp. Lie, resp. pre-Lie) algebra $(V,q)$ are described by the corresponding second cohomology group (Hochschild, pre-Lie or Chevalley) of $V$ (see \cite{G,NR}). Indeed, putting $q_t=q+tc$, with $t^2=0$, the associativity, Jacobi or pre-Lie relation are respectively equivalent to $[Q,C]=0$. Therefore, if $t^2=0$, $(V+tV,q_t)$, is an associative, Lie or pre-Lie algebra if and only if $c$ is respectively a Hochschild, a Chevalley or a pre-Lie cocycle.

Such a deformation is trivial if there is a linear map $a$ such that $\varphi_t(x)=x+ta(x)$ satisfies $q_t(\varphi_t(x),\varphi_t(y))=\varphi_t(q(x,y))$. With $t^2=0$, these conditions are equivalent to $c=d_Ha$, resp. $c=d_{Ch}a$, $c=d_{pL}a$. If the only infinitesimal deformations are the trivial ones, that is if the second Hochschild cohomology group $H^2((V,q))$ vanishes, we say that the corresponding structure is infinitesimally rigid.\\

Suppose now $(V,q)$ is an associative algebra. Therefore it is a pre-Lie algebra for the multiplication $q$, and a Lie algebra for the bracket $[x,y]=q(x,y)-q(y,x)$. Some of these structures can be rigid, and other can be not rigid. Let us study here the natural associative algebra $M_n(\mathbb K)$ of $n\times n$ matrices on a characteristic zero field $\mathbb K$. Denote $\mathfrak{gl}_n(\mathbb K)$ the corresponding Lie algebra, and $(M_n,q)$ the pre-Lie algebra. Remark that $\mathfrak{gl}_n(\mathbb K)$ is a direct product: $\mathfrak{sl}_n(\mathbb K)\oplus \mathbb K id$, where $\mathfrak{sl}_n(\mathbb K)$ is the space of traceless matrices. Put $f(x)=\frac{1}{n}\tr(x)id$. Recall first some well-known results (see \cite{CH} Chap IX.7, and use Hochschild-Serre sequence and Whitehead Lemmas \cite{J}):\\

\begin{prop}
Let $M$ be a $M_n(\mathbb K)$ bi-module, then $H^k(M_n(\mathbb K),M)=0$ for $k>0$. Especially, if $M=M_n(\mathbb K)$, 
$$
H^0(M_n(\mathbb K))=\mathbb K id,\quad H^k(M_n(\mathbb K))=0~\text{ for }~k>0.
$$

Let $M$ be a $\mathfrak{gl}_n(\mathbb K)$-module, and $k>0$, then:
$$
H^k(\mathfrak{gl}_n(\mathbb K),M)\simeq H^k(\mathfrak{sl}_n(\mathbb K),\mathbb K)\otimes M^{\mathfrak{gl}_n(\mathbb K)}\oplus H^{k-1}(\mathfrak{sl}_n(\mathbb K),\mathbb K)\otimes \left( H^1(\mathbb K id,M)\right)^{\mathfrak{gl}_n(\mathbb K)}.
$$
Especially, if $M=\mathfrak{gl}_n(\mathbb K)$, and $\tilde{f}$ is the class of the projection $f$, 
$$
H^0(\mathfrak{gl}_n(\mathbb K))=\mathbb K id,\qquad
H^1(\mathfrak{gl}_n(\mathbb K))=\mathbb K\tilde{f},\qquad H^2(\mathfrak{gl}_n(\mathbb K))=0.
$$
\end{prop}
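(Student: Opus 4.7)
The plan is to invoke three classical inputs—separability for matrix algebras, the Hochschild-Serre spectral sequence, and the Whitehead lemmas—and assemble them in the order listed in the statement.

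First, for the associative part, I would use that $M_n(\mathbb{K})$ is a separable $\mathbb{K}$-algebra, since its enveloping algebra $M_n(\mathbb{K})\otimes_{\mathbb{K}} M_n(\mathbb{K})^{op}\cong M_{n^2}(\mathbb{K})$ is semisimple. Consequently every bimodule is projective over the enveloping algebra, and the Hochschild cohomology $H^k(M_n(\mathbb{K}),M)$ vanishes for $k>0$ (cf.\ \cite{CH}, Chap.~IX.7). For the degree-zero part with $M=M_n(\mathbb{K})$, it is immediate that $H^0=Z(M_n(\mathbb{K}))=\mathbb{K}\cdot id$.

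For the general Chevalley formula, I would exploit the decomposition $\mathfrak{gl}_n(\mathbb{K})=\mathfrak{sl}_n(\mathbb{K})\oplus\mathbb{K}\cdot id$ as a direct product of Lie algebras, with $\mathbb{K}\cdot id$ central. The Hochschild-Serre spectral sequence for the central ideal $\mathbb{K}\cdot id$ reads
$$
E_2^{p,q}=H^p\bigl(\mathfrak{sl}_n(\mathbb{K}),H^q(\mathbb{K}\cdot id,M)\bigr)\Longrightarrow H^{p+q}(\mathfrak{gl}_n(\mathbb{K}),M).
$$
Two observations make it collapse into the claimed form: since $\mathbb{K}\cdot id$ is one-dimensional abelian, only $q\in\{0,1\}$ contribute; and Weyl's complete reducibility for the semisimple algebra $\mathfrak{sl}_n(\mathbb{K})$ gives $H^p(\mathfrak{sl}_n(\mathbb{K}),N)\simeq H^p(\mathfrak{sl}_n(\mathbb{K}),\mathbb{K})\otimes N^{\mathfrak{sl}_n(\mathbb{K})}$ for every finite-dimensional module $N$, also killing all differentials. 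Since $\mathbb{K}\cdot id$ acts trivially on its own cohomology, the $\mathfrak{sl}_n$-invariants of $H^q(\mathbb{K}\cdot id,M)$ coincide with its $\mathfrak{gl}_n$-invariants, which finishes the identification.

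Finally, to specialize to $M=\mathfrak{gl}_n(\mathbb{K})$, I would note that centrality of $id$ forces the adjoint action of $\mathbb{K}\cdot id$ on $\mathfrak{gl}_n$ to be trivial, so $H^0(\mathbb{K}\cdot id,\mathfrak{gl}_n)=H^1(\mathbb{K}\cdot id,\mathfrak{gl}_n)=\mathfrak{gl}_n(\mathbb{K})$, whose $\mathfrak{gl}_n$-invariants equal $Z(\mathfrak{gl}_n(\mathbb{K}))=\mathbb{K}\cdot id$. The Whitehead lemmas \cite{J} give $H^1(\mathfrak{sl}_n(\mathbb{K}),\mathbb{K})=H^2(\mathfrak{sl}_n(\mathbb{K}),\mathbb{K})=0$, and plugging everything into the general formula yields the stated values $H^0=H^1=\mathbb{K}\cdot id$ and $H^2=0$. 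To identify the generator of $H^1$ with the class $\tilde f$ of the projection $f(x)=\frac{1}{n}\tr(x)\,id$, one checks directly that $f$ is a 1-cocycle (both sides of the cocycle identity vanish because $\tr[x,y]=0$ and $id$ is central) and not a coboundary (coboundaries take values in $[\mathfrak{gl}_n,\mathfrak{gl}_n]=\mathfrak{sl}_n$, whereas $f$ lands in $\mathbb{K}\cdot id$). The only real obstacle is the bookkeeping in the spectral sequence needed to produce the exact direct-sum decomposition rather than just an abstract isomorphism, but this amounts to identifying edge maps and is routine once the collapse is established.
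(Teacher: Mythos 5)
Your proposal is correct and follows exactly the route the paper indicates (which it does not write out, referring only to Cartan--Eilenberg Chap.~IX.7 for the separability of $M_n(\mathbb K)$, to the Hochschild--Serre sequence for the decomposition $\mathfrak{gl}_n=\mathfrak{sl}_n\oplus\mathbb K\,id$, and to the Whitehead lemmas for the vanishing of $H^1(\mathfrak{sl}_n,\mathbb K)$ and $H^2(\mathfrak{sl}_n,\mathbb K)$). Your additional checks --- that $\mathfrak{sl}_n$-invariants agree with $\mathfrak{gl}_n$-invariants on $H^q(\mathbb K\,id,M)$, and the explicit verification that $f$ is a non-coboundary cocycle generating $H^1(\mathfrak{gl}_n(\mathbb K))$ --- are accurate and usefully fill in details the paper omits.
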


Especially, $M_n(\mathbb K)$ and $\mathfrak{gl}_n(\mathbb K)$ are rigid. However the pre-Lie algebra $V$ is not infinitesimally rigid:

\begin{lem} 
The second cohomology group $H^2(M_n,q)$ of the pre-Lie algebra $(M_n,q)$ is not vanishing.\\
\end{lem}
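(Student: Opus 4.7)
The plan is to use the explicit coboundary formula from the preceding Proposition to compare the pre-Lie complex of $(M_n,q)$ with its Hochschild complex. A direct inspection of the Chatbouri formula reveals two structural facts. First, for a $1$-cochain $a\colon V\to V$ the pre-Lie coboundary reduces to
\[
d_{pL}a(x,y)=a(x)y-a(xy)+xa(y),
\]
which is exactly the Hochschild coboundary $d_Ha$; hence $B^2_{pL}=B^2_H$ as subspaces of $\mathrm{Hom}(V\otimes V,V)$. Second, applying the formula with $k=1$ and rearranging,
\[
d_{pL}c(x_0,x_1,y)=d_Hc(x_0,x_1,y)-d_Hc(x_1,x_0,y),
\]
so a $2$-cochain $c$ is a pre-Lie cocycle if and only if its Hochschild obstruction $d_Hc$ is symmetric in the first two variables; in particular $Z^2_H\subseteq Z^2_{pL}$.

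Combining these observations with the rigidity $H^2_H(M_n,M_n)=0$ recalled in the preceding Proposition, one obtains
\[
H^2_{pL}(M_n,q)\;\cong\;Z^2_{pL}/Z^2_H,
\]
so the lemma reduces to exhibiting a single bilinear $c\colon V\times V\to V$ whose Hochschild coboundary $d_Hc$ is nonzero yet symmetric in its first two arguments. Equivalently, one seeks an infinitesimal pre-Lie deformation $q_t=q+tc$ of the matrix product that fails to be associative (so $c\notin Z^2_H$) but whose associator is symmetric in $(x,y)$ (the pre-Lie axiom).

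The main obstacle is constructing this cocycle explicitly. Natural candidates built from the trace pairing and the outer Lie derivation $f(x)=\tfrac{1}{n}\mathrm{tr}(x)\,\mathrm{id}$ (the generator of $H^1(\mathfrak{gl}_n)$ from the preceding Proposition) often turn out to be coboundaries: for instance $c(x,y)=xay$ satisfies the pre-Lie cocycle condition but equals $d_{pL}R_a$ where $R_a(x)=xa$, and the symmetric expression $\mathrm{tr}(x)y+\mathrm{tr}(y)x-\mathrm{tr}(xy)\,\mathrm{id}$ equals $n\,d_{pL}f$. A dimension count supports the existence of a nontrivial class: since the pre-Lie $1$-cocycles coincide with the associative derivations of $M_n$ — all inner and parametrised by $V/\mathbb{K}\,\mathrm{id}$ — one has $\dim B^2_{pL}=n^4-(n^2-1)$, and it suffices to show $\dim Z^2_{pL}$ strictly exceeds this bound. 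This can be checked by direct computation for $n=2$, or by producing an explicit cocycle of higher complexity, for example involving elements $a\notin\mathbb{K}\,\mathrm{id}$; verifying the pre-Lie cocycle condition via the seven-term Chatbouri formula while confirming non-triviality modulo $B^2_{pL}$ is the technical heart of the argument.
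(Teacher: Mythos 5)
Your reduction is correct and is exactly the framework the paper uses: for $1$-cochains $d_{pL}=d_H$, so $B^2_{pL}=B^2_H$; for $2$-cochains $d_{pL}c(x,y,z)=d_Hc(x,y,z)-d_Hc(y,x,z)$; and since $H^2_H(M_n,M_n)=0$ one has $B^2_{pL}=B^2_H=Z^2_H$, so the lemma amounts to exhibiting a bilinear $c$ with $d_Hc\neq 0$ but $d_Hc$ symmetric in its first two arguments. The genuine gap is that you never produce such a $c$: the dimension count only computes $\dim B^2_{pL}=n^4-(n^2-1)$ and then merely restates that one must show $\dim Z^2_{pL}$ exceeds it; the appeals to ``direct computation for $n=2$'' and to ``an explicit cocycle of higher complexity'' are announcements, not arguments, and you yourself flag this construction as the technical heart left undone. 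As it stands the proposal proves only the (easy) inclusion $Z^2_H\subseteq Z^2_{pL}$, which by itself gives no nonzero class.

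The missing witness is supplied in the paper by $c_a(x,y)=\tfrac1n\,\tr(x)[y,a]$ with $a\notin\mathbb K\,\mathrm{id}$: one computes $d_Hc_a(x,y,z)=(d_Hf)(x,y)\,[z,a]$ where $f(x)=\tfrac1n\tr(x)\,\mathrm{id}$, and $(d_Hf)(x,y)=\tfrac1n(\tr(y)x-\tr(xy)\,\mathrm{id}+\tr(x)y)$ is symmetric in $(x,y)$, so $d_{pL}c_a=0$; yet $(d_Hf)(e_{12},e_{21})=-\tfrac1n\,\mathrm{id}$, so choosing $z$ with $[z,a]\neq0$ shows $d_Hc_a\neq0$, i.e.\ $c_a\notin Z^2_H=B^2_{pL}$. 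It is worth noting that you had both ingredients in hand --- the map $f$ generating $H^1(\mathfrak{gl}_n)$, whose Hochschild coboundary is the symmetric tensor you wrote down, and the idea of using a non-central $a$ --- but the key step of coupling them, multiplying the scalar-valued symmetric cocycle-defect $d_Hf$ by the inner derivation $[\,\cdot\,,a]$ to manufacture a non-Hochschild-closed $c$ with symmetric $d_Hc$, is absent, and without it the non-vanishing of $H^2(M_n,q)$ is not established.
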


\begin{proof}
Following \cite{Dz}, for any matrix $a$, consider $c_a(x,y)=\frac{1}{n} \tr(x)[y,a]$. Then:
$$
d_Hc_a(x,y,z)=\frac{1}{n}(\tr(y)x-\tr(xy)id+\tr(x)y)[z,a]=(d_Hf)(x,y)[z,a].
$$
Then $d_{pL}c_a(x,y,z)=d_Hc_a(x,y,z)-d_Hc_a(y,x,z)=0$.
But remark that $Z^2(M_n(\mathbb K))=B^2(M_n(\mathbb K))=B^2(M_n,q)$. Thus, if $a$ is not a scalar matrix, there is $z$ such that $[z,a]\neq0$, and $d_Hc_a(e_{12},e_{21},z)=-[z,a]\neq0$, $c_a$ is not in $B^2(M_n,q)$.\\
\end{proof}

Let $(x,y)\mapsto q(x,y)$ any algebraic structure on a space $V$ and $c_k:V\otimes V\rightarrow V$ ($k\geq 1$). If for any $t$, $q_t=q +\sum_{k>0} t^kc_k$ endows $V\otimes \mathbb K[[t]]$ with the same sort of structure, we say that $q_t$ is a formal deformation of $q$. If there is $\varphi_t=id+\sum_{k>0}t^k b_k$, such that $q_t(\varphi_t(x),\varphi_t(y))=\varphi_t(q(x,y))$, we say that $q_t$ is trivial. If there is a non trivial deformation $q_t$, we say that $(V,q)$ is not rigid. If moreover $c_k=0$ for any $k>1$, and $c_1\notin B^2(V,q)$, we say that $q_t=q+tc_1$ is a true deformation at order 1 of $(V,q)$.

Using induction on $k$, it is easy to prove that an infinetisimally rigid structure is rigid. Of course the existence of a true deformation at order 1 implies that $(V,q)$ is not rigid.\\

\begin{cor}
The algebras $M_n(\mathbb K)$, $\mathfrak{gl}_n(\mathbb K)$ are rigid. The pre-Lie algebra $(M_n,q)$ admits true deformations at order 1.\\
\end{cor}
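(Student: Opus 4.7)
The plan is as follows. For the first two claims, the proposition displayed above gives $H^2(M_n(\mathbb{K})) = 0$ and $H^2(\mathfrak{gl}_n(\mathbb{K})) = 0$, so both algebras are infinitesimally rigid; combined with the observation preceding the corollary that an infinitesimally rigid structure is rigid (by a straightforward induction on the order of a formal deformation, extending each partial deformation using the vanishing of $H^2$), this disposes of the associative and Lie cases in one line.

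For the third claim I would exhibit an explicit true deformation at order $1$ of $(M_n,q)$ using the cocycle $c_a(x,y) = \frac{1}{n}\tr(x)[y,a]$ supplied by the preceding lemma, where $a \in M_n(\mathbb{K})$ is any non-scalar matrix. The lemma already provides $c_a \in Z^2(M_n,q) \setminus B^2(M_n,q)$. What remains is to verify that $q_t := q + t c_a$ is genuinely a pre-Lie product for every value of the formal parameter, so that setting $c_k = 0$ for $k \geq 2$ really defines a formal deformation (and not merely an infinitesimal one modulo $t^2$).

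To do this I would expand the pre-Lie associator
$$A(p)(x,y,z) := p(x,p(y,z)) - p(p(x,y),z) - p(y,p(x,z)) + p(p(y,x),z)$$
at $p = q_t$: the $t^0$ part vanishes because $q$ is pre-Lie, the $t^1$ part is $(d_{pL}c_a)(x,y,z) = 0$ by the cocycle property from the preceding lemma, and the $t^2$ part is exactly $A(c_a)(x,y,z)$. The essential step is thus $A(c_a) = 0$, which is a short computation: $c_a(c_a(x,y),z) = \frac{1}{n^2}\tr(x)\tr([y,a])[z,a] = 0$ because the trace of a commutator vanishes, while $c_a(x, c_a(y,z)) = \frac{1}{n^2}\tr(x)\tr(y)[[z,a],a]$ is manifestly symmetric in the pair $(x,y)$; the four terms of $A(c_a)$ therefore cancel in pairs. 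The main obstacle is purely bookkeeping — tracking signs in the four-term associator and confirming that no $t^k$ with $k \geq 3$ appears (automatic, since $q_t$ is linear in $t$). Once $A(c_a) = 0$ is established, $q_t = q + t c_a$ is a pre-Lie product with $c_1 = c_a \notin B^2(M_n,q)$, which is exactly a true deformation at order $1$, completing the corollary.
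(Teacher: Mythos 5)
Your proposal is correct and follows the same route as the paper: rigidity of $M_n(\mathbb K)$ and $\mathfrak{gl}_n(\mathbb K)$ from the vanishing of $H^2$ plus the remark that infinitesimal rigidity implies rigidity, and for $(M_n,q)$ the deformation $q_t=q+tc_a$ with $a$ non-scalar. The paper merely asserts the last step as ``a direct computation''; your expansion of the pre-Lie associator, with the $t^1$ term killed by $d_{pL}c_a=0$ and the $t^2$ term killed by $\tr([y,a])=0$ together with the symmetry of $\tr(x)\tr(y)[[z,a],a]$ in $(x,y)$, is exactly that computation and is correct.
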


\begin{proof}
The first points are the previous results. Consider now the pre-Lie algebra $(M_n,q)$. A direct computation shows that $q_t=q+tc_a$ is a true deformation at order 1 of $(V,q)$ (see also \cite{Dz}).\\
\end{proof}

Suppose now $\mathbb K=\mathbb C$. Then $x\mapsto\tr(x^2)$, and $x\mapsto(\tr(x))^2$ generate the space of $\mathfrak{gl}_n(\mathbb C)$-invariant degree 2 polynomial functions on $\mathfrak{gl}_n(\mathbb C)$ (see \cite{W}). Any symmetric invariant bilinear form $b$ on $\mathfrak{gl}_n(\mathbb C)$ can be written:
$$
b(x,y)=\alpha~\tr(xy)+\beta~\tr(x)\tr(y)=\alpha\tr(x(y-f(y)))+(\frac{1}{n}\alpha+\beta)\tr(x)\tr(y).
$$
Thus, $b$ is non degenerate, if and only if $\alpha(\alpha+n\beta)\neq0$.

Observe now that $(x,y)\mapsto\tr(xy)$ is $M_n(\mathbb C)$-invariant, but $(x,y)\mapsto\tr(x)\tr(y)$ is not $M_n(\mathbb C)$-invariant. Thus the non degenerated bilinear form $(x,y)\mapsto\tr(xy)$ generates the cone of $M_n(\mathbb C)$-invariant symmetric bilinear forms.

Similarly, if $b$ is $(M_n,q)$-invariant, it is $\mathfrak{gl}_n(\mathbb C)$-invariant, thus $b(x,y)=\alpha\tr(xy)+\beta\tr(x)\tr(y)$, and the invariance relation reads: 
$$
\alpha\tr(xyz+yxz)+\beta(\tr(xy)\tr(z)+\tr(y)\tr(xz))=0.
$$
Choosing for instance $x=y=z=e_{11}$, we get $2(\alpha+\beta)=0$, then $x=y=z=id$ gives $2n(\alpha+n\beta)=0$, the unique $(M_n,q)$-invariant symmetric bilinear form is identically zero. Summarizing,

\begin{lem}
\begin{itemize}
\item[1.] The associative algebra $M_n(\mathbb C)$ is a quadratic algebra for the 1-dimensional cone of symmetric bilinear forms $\alpha\tr(xy)$,
\item[2.] The Lie agebra $\mathfrak{gl}_n(\mathbb C)$ is a quadratic Lie algebra for the 2-dimensional cone of symmetric bilinear forms $\alpha\tr(xy)+\beta\tr(x)\tr(y)$ ($\alpha(\alpha+n\beta)\neq0$),
\item[3.] The pre-Lie algebra $(M_n,q)$ is not a quadratic pre-Lie algebra.\\
\end{itemize}
\end{lem}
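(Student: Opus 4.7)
The plan is to assemble the three claims by combining the characterization of $\mathfrak{gl}_n(\mathbb C)$-invariant symmetric bilinear forms on $M_n(\mathbb C)$ (already established in the paragraph preceding the lemma) with the extra invariance condition corresponding to each algebraic structure. In every case, any $b$ that is invariant under the associative, Lie or pre-Lie structure is automatically $\mathfrak{gl}_n(\mathbb C)$-invariant (for the associative and pre-Lie cases this follows by antisymmetrizing the defining invariance relation in the first argument), hence of the form $b(x,y)=\alpha\tr(xy)+\beta\tr(x)\tr(y)$, and non-degenerate exactly when $\alpha(\alpha+n\beta)\neq0$, as shown above. What remains is to check which $(\alpha,\beta)$ survive the extra constraint.

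For item 2 (Lie case) I would simply verify directly that every such $b$ is $\mathfrak{gl}_n(\mathbb C)$-invariant: $\tr([x,y]z)+\tr(y[x,z])=0$ by cyclicity of the trace, and $\tr([x,y])=\tr([x,z])=0$ so the $\beta$-term also vanishes. The full two-parameter cone $\{\alpha\tr(xy)+\beta\tr(x)\tr(y):\alpha(\alpha+n\beta)\neq0\}$ therefore endows $\mathfrak{gl}_n(\mathbb C)$ with the structure of a quadratic Lie algebra.

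For item 1 (associative case) I use the observation recorded in the text: $\tr(xy)$ is $M_n(\mathbb C)$-invariant by cyclicity of the trace, while $(x,y)\mapsto\tr(x)\tr(y)$ is not (for instance $\tr(e_{11}e_{22})\tr(\mathrm{id})=0$ whereas $\tr(e_{11})\tr(e_{22}\cdot\mathrm{id})=1$). Consequently associative invariance forces $\beta=0$, and the non-degeneracy condition becomes $\alpha\neq0$, leaving exactly the one-dimensional cone $\{\alpha\tr(xy):\alpha\neq0\}$.

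For item 3 (pre-Lie case) I specialise the pre-Lie invariance relation $b(q(x,y),z)+b(y,q(x,z))=0$ to the form $b(x,y)=\alpha\tr(xy)+\beta\tr(x)\tr(y)$, obtaining
\[
\alpha\tr(xyz+yxz)+\beta\bigl(\tr(xy)\tr(z)+\tr(y)\tr(xz)\bigr)=0.
\]
Plugging in $x=y=z=e_{11}$ yields $\alpha+\beta=0$, and then $x=y=z=\mathrm{id}$ yields $\alpha+n\beta=0$. For $n\geq2$ the only common solution is $\alpha=\beta=0$, so every $(M_n,q)$-invariant symmetric bilinear form is degenerate, proving that $(M_n,q)$ admits no quadratic pre-Lie structure. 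There is no real obstacle; the only point demanding care is making sure that pre-Lie invariance of $b$ indeed implies $\mathfrak{gl}_n(\mathbb C)$-invariance, which follows by writing the Lie bracket as $[x,y]=q(x,y)-q(y,x)$ and using the symmetry of $b$ to combine the two pre-Lie relations.
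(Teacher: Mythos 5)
Your proposal follows the paper's own proof (which is exactly the paragraph preceding the lemma) essentially line for line: reduce every invariant form to the family $\alpha\tr(xy)+\beta\tr(x)\tr(y)$ via Weyl's description of the degree~$2$ invariants, record that non-degeneracy means $\alpha(\alpha+n\beta)\neq0$, and then test the associative, Lie and pre-Lie invariance conditions on this family, using the same test vectors $e_{11}$ and $\mathrm{id}$ in the pre-Lie case. Items 1 and 2 are fine as you present them (associative invariance really does imply $\mathrm{ad}$-invariance, by applying $b(uv,w)=b(u,vw)$ twice together with the symmetry of $b$).

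The one place you add something to the paper is also the one step that does not close: the claim that pre-Lie invariance of $b$ implies $\mathfrak{gl}_n(\mathbb C)$-invariance ``by antisymmetrizing'' the relation $b(q(x,y),z)+b(y,q(x,z))=0$. If you actually carry out that antisymmetrization (with $q$ the matrix product and $b$ symmetric) you get
$$
b([x,y],z)+b(y,[x,z])=b(x,yz+zy),
$$
so the symmetrized product survives and $\mathrm{ad}$-invariance is not a formal consequence of pre-Lie invariance. (The paper asserts the same implication without proof, so the gap is inherited, but your parenthetical justification is not a proof of it.) The repair is immediate and makes the reduction unnecessary: putting $x=\mathrm{id}$ in $b(q(x,y),z)+b(y,q(x,z))=0$ gives $2b(y,z)=0$, hence $b=0$ and item 3 follows in one line; alternatively, the displayed identity with $y=z=\mathrm{id}$ and then $z=\mathrm{id}$ also forces $b=0$, after which the implication you invoke holds vacuously. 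A minor additional point: for $n=1$ the two forms $\tr(xy)$ and $\tr(x)\tr(y)$ coincide, so your two linear conditions are not independent there; your restriction ``for $n\geq2$'' quietly leaves that (trivial, but still covered by the $x=\mathrm{id}$ argument) case open.
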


Recall that in Example \ref{VV*}, we saw that the space $M_n(\mathbb C)\oplus M_n(\mathbb C)$ is a quadratic pre-lie algebra for the product and the bilinear form:
$$
q_W(x_1+x_2,y_1+y_2)=x_1y_1-y_2x_1,\qquad b(x_1+x_2,y_1+y_2)=\tr(x_2y_1+x_1y_2).
$$





\end{document}